\theoremstyle{plain}
\newtheorem{thm}{Theorem}[section]
\newtheorem{cor}[thm]{Corollary}
\newtheorem{lem}[thm]{Lemma}
\newtheorem{prop}[thm]{Proposition}
\theoremstyle{definition}
\theoremstyle{definition}
\newtheorem{rem}[thm]{Remark}
\newtheorem*{ass*}{Assumption}
\numberwithin{equation}{section}
\newcommand{\NN}{\mathbb{N}} 
\newcommand{\RR}{\mathbb{R}} 
\newcommand{\Met}{\mathrm{Met}}			
\newcommand{\Rn}{\mathbb{R}^{n}}                    
\DeclareMathOperator{\tr}{tr} %
\let\on=\operatorname
\def\SO{\on{SO}}
\def\distO{\on{dist}_{\Omega_+^1}}
\def\distnm{\on{dist}^{\Omega^1_+}_x}
\def\distnm{\on{dist}_{n\times m}}
\def\Sym{\on{Sym}}
\def\distMet{\on{dist}_{\Met}}
\def\distSym{\on{dist}_{m\times m}}
\def\lin{\on{lin}}
\begin{document}

\title[The space of full-ranked one-forms]{The Metric Completion of the Space of Vector-Valued One-Forms}

\author{Nicola Cavallucci}
\address{Karlsruhe Institute of Technology, Engelstrasse 2, D-76128, Karlsruhe, Germany}
\email{n.cavallucci23@gmail.com}

\author{Zhe Su}
\address{Department of Neurology, University of California, Los Angeles, Los Angeles, CA, USA}
\email{zsu20@g.ucla.edu}


\date{\today}


\begin{abstract}
	The space of full-ranked one-forms on a smooth, orientable, compact manifold (possibly with boundary) is metrically incomplete with respect to the induced geodesic distance of the generalized Ebin metric. We show a distance equality between the induced geodesic distances of the generalized Ebin metric on the space of full-ranked one-forms and the corresponding Riemannian metric defined on each fiber. Using this result we immediately have a concrete description of the metric completion of the space of full-ranked one-forms. Additionally, we study the relationship between the space of full-ranked one-forms and the space of all Riemannian metrics, leading to quotient structures for the space of Riemannian metrics and its completion.
\end{abstract}

\keywords{Space of vector-valued one-forms, the generalized Ebin metric, metric completion}

\maketitle

\section{Introduction}
Let $M$ be a smooth, orientable, compact manifold (possibly with boundary) of dimension $m$. The space of smooth $\Rn$-valued full-ranked one-forms $\Omega^1_+(M,\Rn)$ with $n\geq m$ on $M$ is an infinite dimensional Fr{\'e}chet manifold with tangent space the space of all smooth $\Rn$-valued one-forms $\Omega^1(M,\Rn)$. In \cite{bauer2021oneforms} Bauer et al. defined a diffeomorphism-invariant $L^2$ Riemannian metric, called the generalized Ebin metric, on the space of full-ranked one-forms $\Omega^1_+(M,\Rn)$. The metric results in interesting geometry: it induces a non-degenerate geodesic distance, which is a non-trivial result for infinite-dimensional manifolds as the induced geodesic distance of a Riemannian metric may fail to be positive definite \cite{eliashberg1993bi,michor2005vanishing,bauer2013geodesic,bauer2018vanishing}; there are explicit solutions for the geodesic initial value problem with respect to the $L^2$ metric; the sectional curvature, depending on $m$ and $n$, is either sign-semidefinite or admit both signs; the space of full-ranked one-forms is geodesically and metrically incomplete. Moreover, the $L^2$ metric on the space of full-ranked one-forms is linked via a Riemannian submersion to the Ebin metric \cite{ebin1970manifold, DeWitt67} on the space of all Riemannian metrics, which has a wide range of applications in general relativity, Teichm{\"u}ller theory and mathematical statistics and has been well studied for its basic geometry in \cite{gil1991riemannian, freed1989basic, clarke2010metric, clarke2011riemannian, clarke2013completion, clarke2013geodesics, cavallucci2022}. The generalized Ebin metric, due to its diffeomorphism-invariant property, also has been proposed for shape analysis of curves and surfaces \cite{bauer2021oneforms,su2020shape}. 

In this paper, we aim to delve deeper into the induced metric structure of the generalized Ebin metric and give a concrete description of the metric completion of the space of full-ranked one-forms, along with its relationship to the metric completion of the space of Riemannian metrics. For the whole paper we restrict ourselves to the case $n>m$, since when $n=m$ the fiber space at every point $x\in M$ of the space of full-ranked one-forms has two connected components, requiring the metric to be studied separately on each component, which increases the complexity.

For the space of Riemannian metrics, Clarke has proved a distance equality in \cite{clarke2013geodesics} for the induced geodesic distance of the Ebin metric, and characterized its metric completion in \cite{clarke2013completion} using a notion of $\omega$-convergence. Although the same strategies can be applied to our case, here we provide simpler and shorter proofs for a distance equality \eqref{eq.dist-equality}, in a way similar to what the first author did in \cite{cavallucci2022} for the space of Riemannian metrics. This distance equality forms the main result of this paper and leads directly to an explicit characterization of the metric completion of the space of full-ranked one-forms.

The generalized Ebin metric on the space of full-ranked one-forms is given by the integral over $M$ of Riemannian metrics defined on each fiber, i.e., the space of full rank tensors $(\Rn\otimes T_x^*M)_+$, see Section~\ref{sec.L2metric} for a definition of the metric. Denote by $\distO$ the induced geodesic distance function of the generalized Ebin metric \eqref{eq.metric.OneForms} on $\Omega^1_+(M,\Rn)$ and by $d_x$ the induced geodesic distance function of the corresponding Riemannian metric \eqref{eq-metric} on $(\Rn\otimes T_x^*M)_+$, depending on a fixed Riemannian metric $g_0$ on $M$. Let $d\mu_{g_0}$ be the induced volume form of $g_0$. The main result of the paper is as follows. 
\begin{thm}\label{theo-intro-dist-equality}
	Let $\alpha,\beta \in \Omega^1_+(M,\mathbb{R}^n)$. Then
	\begin{equation}\label{eq.dist-equality}
		\distO(\alpha,\beta)^2 = \int_Md_x(\alpha(x),\beta(x))^2d\mu_{g_0}(x).
	\end{equation}
\end{thm}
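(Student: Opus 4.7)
The plan is to prove Theorem~\ref{theo-intro-dist-equality} via two matching inequalities, the lower bound being soft and the upper bound requiring construction of near-optimal paths.

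For the lower bound $\distO(\alpha,\beta)^2 \ge \int_M d_x(\alpha(x),\beta(x))^2 d\mu_{g_0}(x)$, I would take an arbitrary smooth path $t\mapsto \alpha_t$ in $\Omega^1_+(M,\Rn)$ connecting $\alpha$ to $\beta$ and compute its energy. By Fubini,
\begin{equation*}
E(\alpha_\cdot) = \int_0^1\!\!\int_M g_{\alpha_t(x)}\!\big(\partial_t\alpha_t(x),\partial_t\alpha_t(x)\big)\,d\mu_{g_0}(x)\,dt = \int_M E_x\big(t\mapsto \alpha_t(x)\big)\,d\mu_{g_0}(x),
\end{equation*}
where $E_x$ denotes the fiber energy on $(\Rn\otimes T_x^*M)_+$. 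Since $E_x(\gamma)\ge d_x(\gamma(0),\gamma(1))^2$ for any $C^1$ curve on $[0,1]$ (energy bounds length squared by Cauchy--Schwarz, with equality for constant-speed reparametrizations), and since $\distO(\alpha,\beta)^2 = \inf_{\alpha_\cdot} E(\alpha_\cdot)$, the lower bound follows by taking the infimum.

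For the upper bound I would exploit the explicit fiber geodesics constructed by Bauer et al.\ \cite{bauer2021oneforms}. Fix $x\in M$; when the minimizing geodesic $\gamma_x(t)$ from $\alpha(x)$ to $\beta(x)$ in $(\Rn\otimes T_x^*M)_+$ is unique, stays in the open stratum of full-rank tensors, and depends smoothly on $x$, the candidate path $\tilde\alpha_t := (x\mapsto \gamma_x(t))$ lies in $\Omega^1_+(M,\Rn)$ and has energy exactly $\int_M d_x(\alpha(x),\beta(x))^2 d\mu_{g_0}(x)$, yielding the upper bound directly.

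The main obstacle, as already encountered by Clarke \cite{clarke2013geodesics} and Cavallucci \cite{cavallucci2022} for the space of Riemannian metrics, is that the pointwise geodesic may meet the rank-deficient boundary of $(\Rn\otimes T_x^*M)_+$ at intermediate times or fail to vary smoothly with $x$ near fiber cut loci. Following the strategy of \cite{cavallucci2022}, I would resolve this by a perturbation scheme: for each $\varepsilon>0$, replace $(\alpha,\beta)$ by full-rank one-forms $(\alpha_\varepsilon,\beta_\varepsilon)$ close to them and chosen so that the pointwise geodesic is unique, smooth in $x$, and stays uniformly bounded away from the boundary; then concatenate with short paths joining the original endpoints to the perturbed ones; finally let $\varepsilon\to 0$. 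The most delicate step will be obtaining a uniform integrable domination for $d_x(\alpha_\varepsilon(x),\beta_\varepsilon(x))^2$, allowing one to pass to the limit under the integral sign via dominated convergence; such a bound should come from the closed-form fiber geodesic of \cite{bauer2021oneforms} together with continuity of $d_x$ under small perturbations of the endpoints.
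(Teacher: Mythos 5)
Your lower bound argument is correct and matches the one the paper cites from Bauer et al.\ \cite[Theorem~4.5]{bauer2021oneforms}: energy dominates squared length fiberwise, Fubini, then infimize over paths.

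For the upper bound, your plan diverges from the paper's and has a genuine gap. You propose to use pointwise minimizing geodesics $\gamma_x(t)$ in the fibers and to resolve the well-known obstruction (geodesics meeting the rank-deficient set, non-smooth dependence on $x$) by perturbing $\alpha,\beta$ to $\alpha_\varepsilon,\beta_\varepsilon$ ``so that the pointwise geodesic is unique, smooth in $x$, and stays uniformly bounded away from the boundary.'' This step is unsubstantiated and, as stated, unlikely to hold. The fiber $(M_+(n,m),\distnm)$ is geodesically incomplete, and for a pair $(A,B)$ the distance may only be realized by paths that approach the singular stratum arbitrarily closely (or pass through the completion point $0$); in that situation a small $C^0$ perturbation of the endpoints cannot produce optimal paths bounded uniformly away from $\text{Sing}(n,m)$, because the perturbed optima track the unperturbed ones by continuity. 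Moreover, even when fiberwise minimizers exist, you give no mechanism for selecting them so that $x\mapsto\gamma_x(t)$ is a smooth section of $(\Rn\otimes T^*M)_+$ globally on $M$ — this is a global selection/cut-locus problem that a pointwise-chosen perturbation does not address.

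The paper avoids all of this deliberately. Instead of geodesics it uses piecewise \emph{linear} paths in local standard trivializations (Lemma~\ref{lemma-piecewise-approximation} shows these approximate $\distnm$ arbitrarily well), glues them via a partition of unity, and then confronts the real difficulty: partition-of-unity averages of full-rank matrices need not be full-rank. The resolution is the Key Lemma (Lemma~\ref{lemma-combination}), a Hausdorff-measure/Baire-category statement saying that one can perturb the vertices $A_{j,i}$ of the piecewise linear paths by arbitrarily small amounts so that the glued convex combination stays in $M_+(n,m)$ for all $t$ and all $y$. That perturbation acts on a finite list of matrices in $M(n,m)$ using the codimension-$\geq 2$ structure of $\text{Sing}(n,m)$, which is what makes it tractable; your scheme perturbs whole one-forms $\alpha,\beta$ and appeals to properties of geodesics that are not available. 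To repair your proposal along the lines you sketch, you would essentially need to prove something equivalent to the Key Lemma anyway, and you would additionally have to control existence, uniqueness, and smooth $x$-dependence of fiberwise minimizers — a substantially harder route than the paper's piecewise-linear construction. The ``uniform integrable domination'' you mention for passing to the limit is also not established and would not come for free.
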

It is worth noting that the distance $\distO$ of the generalized Ebin metric \eqref{eq.metric.OneForms} is independent of the choice of $g_0$, so it is the right hand side of \eqref{eq.dist-equality}.
The metric space of full rank tensors $((\Rn\otimes T_x^*M)_+, d_x)$ is isometric to the space of $n\times m$ full rank matrices $(M_+(n,m), \distnm)$ under a standard local trivialization, where $\distnm$ is the induced geodesic distance of the Riemannian metric \eqref{eq-metric-standard} on the space of full rank matrices $M_+(n,m)$. A definition of a standard local trivialization can be found in Section~\ref{sec.L2metric}. The metric completion for the finite dimensional space of full rank matrices $M_+(n,m)$ with respect to the geodesic distance $\distnm$, denoted by $\overline{M_+(n,m)}$, has been characterized in \cite{bauer2021oneforms}: it is given by the space of all $n\times m$ matrices modulo the equivalence relation that two matrices are identified if they both are not full rank. However, for the infinite dimensional space of full-ranked one-forms $\Omega^1_+(M,\Rn)$, only a conjecture was given in \cite{bauer2021oneforms} for its metric completion with respect to the geodesic distance $\distO$. Using Theorem \ref{theo-intro-dist-equality} we immediately have a concrete characterization of the metric completion of the space of full-ranked one-forms $\Omega^1_+(M,\Rn)$.
\begin{thm}
	Let $M$ be a smooth, orientable, compact manifold (possibly with boundary) of dimension $m$ and let $n>m$. The metric completion of the space of full ranked one-forms $\Omega^1_+(M,\Rn)$ is isometric to $L^2(M,\overline{M_+(n,m)})$. Therefore it depends only on $n$ and $m$.
\end{thm}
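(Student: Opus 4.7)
The plan is to realize the metric completion of $(\Omega^1_+(M,\Rn),\distO)$ as the closure of its isometric image in the complete metric space $L^2(M,\overline{M_+(n,m)})$, and then to show that this image is dense.

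First, I construct a measurable global orthonormal frame of $T^*M$ (with respect to $g_0$) by choosing a finite open cover on which $T^*M$ trivializes, applying Gram--Schmidt on each patch, and gluing via a measurable partition of $M$. This frame identifies each fiber $(\Rn\otimes T_x^*M)_+$ with $M_+(n,m)$ in a way that, by the fiber-wise isometry recalled before Theorem \ref{theo-intro-dist-equality}, is an isometry onto $(M_+(n,m),\distnm)$. Every $\alpha\in\Omega^1_+(M,\Rn)$ therefore induces a measurable map $\tilde\alpha\colon M\to M_+(n,m)\subset\overline{M_+(n,m)}$, and Theorem \ref{theo-intro-dist-equality} states exactly that $\iota\colon\alpha\mapsto\tilde\alpha$ is an isometric embedding into $L^2(M,\overline{M_+(n,m)})$. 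The latter is complete (since $\overline{M_+(n,m)}$ is a complete separable metric space), so the theorem reduces to showing that the image of $\iota$ is dense.

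The density argument proceeds via three successive approximations of an arbitrary $f\in L^2(M,\overline{M_+(n,m)})$, given $\varepsilon>0$. First, separability of $\overline{M_+(n,m)}$ allows one to approximate $f$ in $L^2$ by a simple function $s=\sum_{i=1}^{k}A_{i}\mathbf{1}_{E_{i}}$ with $A_{i}\in\overline{M_+(n,m)}$. Second, any value $A_i$ equal to the unique equivalence class $\ast$ of non-full-rank matrices (the extra point in $\overline{M_+(n,m)}$ described in \cite{bauer2021oneforms}) is replaced by a full-rank matrix $\tilde A_i\in M_+(n,m)$ arbitrarily close to $\ast$, yielding a simple function $\tilde s$ with values in $M_+(n,m)$ and $L^2$-distance $<\varepsilon$ from $s$. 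Third, $\tilde s$ is approximated in $L^2$ by a smooth map $\tilde\alpha\colon M\to M_+(n,m)$; this map corresponds via the measurable frame to a full-ranked one-form in $\Omega^1_+(M,\Rn)$, whose image under $\iota$ is close to $f$ by the triangle inequality.

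The main obstacle is the last step, because $M_+(n,m)$ is only an open subset of the linear space $M(n,m)$ and a naive mollification of $\tilde s$ may exit it near the jumps $\partial E_i$. The way around this uses that $M_+(n,m)$ is path-connected (since $n>m$) and open in $M(n,m)$: shrink each $E_i$ slightly to $E_i'$ with $\mu_{g_0}(E_i\setminus E_i')$ small, and on a thin neighborhood of the boundaries interpolate continuously between consecutive values $\tilde A_i$ along paths that remain in $M_+(n,m)$. Mollifying the resulting continuous map in local coordinates yields a smooth $M(n,m)$-valued map which, by uniform continuity and the openness of $M_+(n,m)$, stays inside $M_+(n,m)$ for a sufficiently small mollifier parameter, while the $L^2$ error contributed by the boundary region can be kept below $\varepsilon$. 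Combining the three approximations proves density and hence the isometry; independence from the geometry of $(M,g_0)$ then follows because all finite non-atomic measures on $M$ give mutually isomorphic $L^2$ target spaces.
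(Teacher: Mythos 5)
Your high-level plan (embed isometrically via Theorem~\ref{theo-intro-dist-equality} into the complete target $L^2(M,\overline{M_+(n,m)})$, then show the image is dense) is the same as the paper's, but the density argument has a genuine gap that comes from using a merely measurable global frame.

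You build the embedding $\iota$ by identifying each fiber $(\Rn\otimes T^*_xM)_+$ with $M_+(n,m)$ through a \emph{measurable} global $g_0$-orthonormal frame, obtained by Gram--Schmidt on patches and gluing along a measurable partition. Under this identification a smooth one-form is sent to a map $M\to M_+(n,m)$ that is only piecewise smooth, jumping across the gluing boundaries; this is unavoidable, since $T^*M$ has no smooth global frame unless $M$ is parallelizable. Conversely, a globally smooth map $M\to M_+(n,m)$ does \emph{not} correspond under the frame to a smooth one-form. Hence the mollified $\tilde\alpha\colon M\to M_+(n,m)$ produced at the end of your density argument is not $\iota(\alpha)$ for any $\alpha\in\Omega^1_+(M,\Rn)$; it yields only a measurable full-ranked section, which you would still have to approximate by a genuinely \emph{smooth} one-form while staying inside the open, non-convex fibers $M_+(n,m)$. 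That remaining step is precisely the nontrivial content. The paper handles it by working with sections throughout (not with $M_+(n,m)$-valued maps): it approximates $L^2$-sections by continuous ones via Dugundji's extension theorem, and then repairs the loss of full rank caused by partitions of unity using the Key Lemma~\ref{lemma-combination}, the same device that drives the proof of Theorem~\ref{theo-intro-dist-equality}. Your ``shrink the $E_i$, interpolate near boundaries, and mollify'' step has no substitute for this correction, and would itself break down where several $E_i$ meet or where $\partial E_i$ is irregular, since openness and uniform continuity alone do not keep a mollified map inside the non-convex set $M_+(n,m)$.
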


We demonstrate later that the space of Riemannian metrics $\Met(M)$, equipped with the geodesic distance induced by a multiple of the Ebin metric, is isometric to a quotient of the space of full-ranked one-forms $\Omega^1_+(M,\Rn)$ with the geodesic distance induced by the generalized Ebin metric. This result provides a global analogy of the relationship between the space of positive definite symmetric matrices and the space of full rank matrices \cite{bauer2021oneforms}. Using the descriptions of the metric completions of the space of full-ranked forms from this paper and the space of Riemannian metrics from \cite{cavallucci2022}, we observe a similar phenomenon in their respective metric completions. 

The paper is organized as follows. In Section~\ref{sec.L2metric} we first recall the definition of the generalized Ebin metric on the space of full-ranked one-forms $\Omega^1_+(M,\Rn)$ and then complete the proofs of Theorem~\ref{theo-intro-dist-equality}. Using this theorem, we then give a concrete description of the metric completion of the space of full-ranked one-forms $\Omega^1_+(M,\Rn)$, and confirm the conjecture in \cite{bauer2021oneforms}. In Section~\ref{sec.EbinMetric}, we explore the relationship between the space of Riemannian metrics and the space of full-ranked one-forms. We define a multiple of the Ebin metric on the space of Riemannian metrics and then show a quotient structure of the space of Riemannian metrics. Finally, we extend this result to their metric completions using their $L^2$ mapping space formulations and give a quotient characterization of the metric completion of the space of Riemannian metrics.

\subsection*{Acknowledgments}
N. Cavallucci was partially supported by the SFB/TRR 191, funded by the DFG. Z. Su was supported by NIH/NIAAA award R01-AA026834.


\section{The metric on the space of full-ranked one-forms}\label{sec.L2metric}
Let $\Omega^1(M,\Rn)$ denote the space of smooth $\Rn$-valued one-forms on $M$, i.e., the Fr\'echet vector space of smooth sections of the tensor product bundle $\Rn\otimes T^*M$, where $M$ is a smooth, orientable, compact manifold (possibly with boundary) of dimension $m$ with $m\leq n$. In local coordinates $\{x^i, i=1,2,\cdots, m\}$, with respect to the standard basis $\{e_i, 1\leq i\leq n\}$ of $\Rn$ and the basis $\{dx^i, 1\leq i\leq m\}$ of $T^*_xM$, each one-form $\alpha\in\Omega^1(M,\Rn)$ at $x\in M$ can be represented by a $n\times m$ matrix, namely $\alpha = \alpha^i_j e_i\otimes dx^j$. So, called $M(n,m)$ the space of all $n\times m$ matrices, a local trivialization of $\Rn\otimes T^*M$ is given by a commutative diagram
\begin{equation}
	\label{diagram}
	\begin{tikzcd}
		\pi^{-1}(U) \arrow[r, "\varphi"] \arrow[d, "\pi"'] 
		& U \times M(n,m) \arrow[ld, "\pi_1"] \\ 
		U 
	\end{tikzcd}
\end{equation}
where $\pi_1$ is the projection on the first coordinate, $U$ is an open subset of $M$ and $\varphi$ is a diffeomorphism of vector bundles, in particular the restriction of $\varphi$ at each fiber is a linear map.

The space of full-ranked $\mathbb{R}^n$-valued one-forms is the subset of $\Omega^1(M,\mathbb{R}^n)$ defined by
\begin{align}
	\Omega^1_+(M,\mathbb{R}^n) := \lbrace \alpha \in \Omega^1(M,\mathbb{R}^n) \text{ s.t. } \alpha(x) \text{ has rank } m \text{ for all } x \in M \rbrace,
\end{align}
which is an open subset of the Fr\'echet vector space $\Omega^1(M,\Rn)$, thus it is a smooth Fr\'echet manifold \cite{hamilton1982inverse} and the tangent space at each point is canonically identified with $\Omega^1(M,\Rn)$.
The condition of having rank $m$ can be checked using any local trivialization of $\Rn\otimes T^*M$. Indeed if $\alpha\in \Omega^1(M,\Rn)$ and $\varphi$ is a local trivialization around $x\in M$ then $\alpha(x)$ has rank $m$ if and only if $(\pi_2\circ \varphi \circ \alpha) (x) \in M(n,m)$ has full rank. Here $\pi_2$ is the projection on the second factor in diagram \eqref{diagram}. The space $\Omega^1_+(M,\mathbb{R}^n)$ is the space of smooth sections of the fiber bundle $(\Rn\otimes T^*M)_+$ over $M$, whose fibers are the set $M_+(n,m)$ of full rank $n\times m$ matrices.
A corresponding local trivialization for $(\Rn\otimes T^*M)_+$ is a commutative diagram
\begin{center}
	\begin{tikzcd} 
		\pi^{-1}(U) \arrow[r, "\varphi"] \arrow[d, "\pi"'] 
		& U \times M_+(n,m) \arrow[ld, "\pi_1"] \\ 
		U 
	\end{tikzcd}
\end{center}
By the discussion above we see that any local trivialization of $(\Rn\otimes T^*M)$ induces a local trivialization of $(\Rn\otimes T^*M)_+$.

\subsection{The \texorpdfstring{$L^2$}{Lg} metric on the space of full-ranked one-forms}
We recall the definition of the $L^2$ metric introduced in \cite{bauer2021oneforms} on $\Omega^1_+(M,\Rn)$. If $\alpha\in\Omega^1_+(M,\Rn)$ then $\alpha^T = \alpha^i_jdx^i\otimes e_j$ in local coordinates. The $(2,1)$ contraction $\alpha^T\alpha := C(\alpha^T\otimes\alpha)$ of the tensor product $\alpha^T\otimes\alpha$ with respect to the Euclidean scalar product on $\Rn$ defines a Riemannian metric on $M$. For each $x\in M$, the metric $\alpha^T\alpha$ induces an inner product $(\alpha^T\alpha)^{-1}(x)$ on the cotangent space $T_x^*M$. The Euclidean scalar product on $\Rn$, together with the inner product $(\alpha^T\alpha)^{-1}(x)$ on $T_x^*M$, thus yield a scalar product on the tensor product space $\Rn\otimes T_x^*M$:
\begin{align}
	\tr\left(\zeta(x)(\alpha^T\alpha)^{-1}(x)\eta^T(x)\right)
\end{align}
for $\zeta,\eta\in T_{\alpha}\Omega^1_+(M,\Rn)$. The $L^2$ metric on the space of full-ranked one-forms $\Omega^1_+(M,\Rn)$ is then defined as the integral over $M$ of this scalar product with respect to the volume form induced by the metric $g = \alpha^T\alpha$:
\begin{align}\label{eq.metric.OneForms}
	G_{\alpha}(\zeta, \eta) &= \int_M\tr\left(\zeta(x)(\alpha^T\alpha)^{-1}(x)\eta^T(x)\right)d\mu_{g}(x). 
\end{align}
Here we omit $x$ in the integrand for simplicity and $d\mu_{g} = \sqrt{\det(g)}dx$ in local coordinates. 
\begin{rem}\label{rem.penroseinv}
	Using the pointwise Moore-Penrose inverse of $\alpha$, we can obtain an alternative formula for the integrand of the metric \eqref{eq.metric.OneForms}. The Moore-Penrose inverse of $\alpha$, for each $x\in M$, is given by the tensor obtained from $\alpha^T$ by raising the lower index with the Euclidean metric $\delta$ and lowering the upper index with the induced metric $g$ of $\alpha$, and locally $\alpha^+ = g^{is}\alpha_s^k\delta_{kj} \frac{\partial}{\partial x^i}\otimes e^j $. Here $\{e^i \}$ denotes the dual basis for $\{e_i\}$ on $\RR^n$. (Alternatively, the Moore-Penrose inverse $\alpha^+$ can be seen as the unique adjoint operator of $\alpha$ satisfying $\delta(\alpha(h), w) = g(h, \alpha^+(w))$ with $h\in T_xM$, $w\in\Rn$, and it has property that $\alpha^+\alpha$ gives the identity endomorphism of $T_xM$.) An alternative form of the integrand of the metric \eqref{eq.metric.OneForms} is then given by:
	\begin{align}
		\tr\left(\zeta(\alpha^T\alpha)^{-1}\eta^T\right) = \tr\left(\boldsymbol{\zeta}\boldsymbol{\eta}^T\right),
	\end{align}
	where $\boldsymbol{\zeta} = \zeta\alpha^+$ and $\boldsymbol{\eta} = \eta\alpha^+$ simply denote the contractions of tensors or the compositions of mappings.
\end{rem}
The formula \eqref{eq.metric.OneForms} clearly shows that the metric $G$ is independent of the original metric on $M$. Another expression for the metric $G$ can be given with the help of a fixed Riemannian metric $g_0$ on $M$, whose volume form is denoted by $\mu_{g_0}$. On each fiber $( \Rn\otimes T^*_xM)_+$ we consider the following Riemannian metric. For $a\in ( \Rn\otimes T^*_xM)_+$ and $u,v \in T_a( \Rn\otimes T^*_xM)_+ \cong  \Rn\otimes T^*_xM$ we set
\begin{equation}
	\label{eq-metric}
	\langle u,v \rangle_{a,x} := \text{tr}(u(a^Ta)^{-1}v^T) \sqrt{\det (g_0(x)^{-1} (a^Ta))}.
\end{equation}
Then
\begin{equation}
	\label{eq-metric-g0}
	G_{\alpha}(\zeta, \eta) = \int_M\langle \zeta(x), \eta(x)\rangle_{\alpha(x),x}d\mu_{g_0}(x).
\end{equation}

A local trivialization $\varphi$ of the bundle $\Omega^1(M,\RR^n)$ induced by a local chart on an open set $U$ sending the volume form $\mu_{g_0}$ to the Euclidean one is called \emph{standard}. Let $\varphi\vert_x \colon ( \Rn\otimes T^*_xM) \to M(n,m)$ be the restriction of $\varphi$ to the fiber at $x\in U$. Then the pushforward of the metric on $( \Rn\otimes T^*_xM)$, namely $(\varphi\vert_x)_*(\langle \cdot,\cdot \rangle_{\cdot,x})$, defines the following Riemannian metric on $M_+(n,m)$:
\begin{equation}
	\label{eq-metric-standard}
	\langle U,V \rangle_{A} = \text{tr}(U(A^TA)^{-1}V^T) \sqrt{\det (A^TA)},
\end{equation}
where $A\in M_+(n,m)$ and $U,V\in T_AM_+(n,m) \cong M(n,m)$. We denote this metric by $g_{n\times m}$.

The Riemannian structures $\langle \cdot, \cdot \rangle_{\cdot, x}$ on $( \Rn\otimes T^*_xM)_+$, $g_{n\times m}$ on $M_+(n,m)$ and $G$ on $\Omega^1_+(M,\Rn)$ induce corresponding Riemannian distances $d_x$ on $( \Rn\otimes T^*_xM)$, $\distnm$ on $M_+(n,m)$ and $\distO$ on $\Omega^1_+(M,\Rn)$, by taking the infimums of the lengths of piecewise $C^1$-paths connecting two elements. From the discussion above it follows that if $\varphi$ is a standard local trivialization around $x\in M$ then $\varphi\vert_x$ is an isometry between $d_x$ and $\distnm$.

Our main result, Theorem \ref{theo-intro-dist-equality}, says that \eqref{eq-metric-g0} implies that $\distO$ coincides with the integral of the distances $d_x$. Before proving it we present some easy results on the metric space $(M_+(n,m), \distnm)$.

\subsection{The space \texorpdfstring{$M_+(n,m)$}{Lg}} 
A matrix $A\in M(n,m)$ has rank smaller than $m$ if all $m\times m$ minors of $A$ have determinants equal to $0$. Therefore the set $\text{Sing}(n,m):= M(n,m) \setminus M_+(n,m)$ is a submanifold of $M(n,m)$ of codimension at least $2$ (recall that $m < n$). Since the dimension of $M(n,m)$ is $nm$ and $\text{Sing}(n,m)$ has codimension at least $2$ we have 
\begin{equation}
	\label{eq-Hausdorff-measure}
	\mathcal{H}^{nm-1}(\text{Sing}(m,n))=0,
\end{equation}
where $\mathcal{H}^{nm-1}$ is the $(nm-1)$-dimensional Hausdorff measure on $M(n,m)$ relative to the Euclidean distance $d_\text{Eucl}$.
This property allows us to prove the following key property of the geometry of $M(n,m)$. This is the crucial ingredient in the proof of Theorem \ref{theo-intro-dist-equality} since it allows to construct good enough paths in $\Omega^1_+(M,\Rn)$ joining any two elements of $\Omega^1_+(M,\Rn)$.
\begin{lem}[Key Lemma.]
	\label{lemma-combination}
	Let $A_1,A_2,\ldots, A_k, B_2,\ldots, B_k \in M(n,m)$ such that $\sum_{i=1}^k A_i \in M_+(n,m)$. Then the set
	$$\left\lbrace B\in M(n,m) \text{ s.t. } \lin(A_1,B)(t) + \sum_{i=2}^k \lin(A_i,B_i)(t)\in M_+(n,m) \,\,\,\, \forall t\in [0,1]\right\rbrace$$
	is open and dense in $M(n,m)$, where  $\lin(A,B)(t) := (1-t)A + tB$ for $A, B \in M(n,m)$.
\end{lem}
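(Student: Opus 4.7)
The plan is to reduce the statement to a one-point perturbation question and then apply the codimension bound on $\mathrm{Sing}(n,m)$ underlying \eqref{eq-Hausdorff-measure}. Setting $S := \sum_{i=1}^k A_i \in M_+(n,m)$ and $C := \sum_{i=2}^k B_i$, expanding the linear interpolations gives
\begin{equation*}
\lin(A_1,B)(t) + \sum_{i=2}^k \lin(A_i,B_i)(t) = (1-t)S + t(B+C).
\end{equation*}
After translating the variable by $-C$, it suffices to show that
\begin{equation*}
U := \{D \in M(n,m) : (1-t)S + tD \in M_+(n,m) \text{ for all } t \in [0,1]\}
\end{equation*}
is open and dense in $M(n,m)$.

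Openness of $U$ is immediate from compactness. The map $\Phi : [0,1] \times M(n,m) \to M(n,m)$, $\Phi(t,D) := (1-t)S + tD$, is continuous, and whenever $D \in U$ the compact set $\Phi([0,1] \times \{D\})$ lies in the open set $M_+(n,m)$; a standard tube-lemma argument yields a whole neighborhood of $D$ inside $U$.

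For density, write $U^c = \pi_2\bigl(\Phi^{-1}(\mathrm{Sing}(n,m))\bigr)$, where $\pi_2 : [0,1] \times M(n,m) \to M(n,m)$ is the second projection. Since $\Phi(0,D) = S \in M_+(n,m)$ for every $D$, we may restrict to $t \in (0,1]$; on this range $\Phi$ is a submersion, because $\partial_D\Phi = t\cdot\mathrm{id}$. Stratify $\mathrm{Sing}(n,m) = \bigsqcup_{k=0}^{m-1} M_k(n,m)$ by rank; each rank-$k$ stratum $M_k(n,m)$ is a smooth submanifold of $M(n,m)$ of codimension $(n-k)(m-k) \geq n-m+1 \geq 2$, which is precisely the structural fact behind \eqref{eq-Hausdorff-measure}. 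Submersion invariance gives that each $\Phi^{-1}(M_k(n,m)) \cap ((0,1] \times M(n,m))$ is a smooth submanifold of dimension at most $nm-1$, and its image under the smooth map $\pi_2$ into the $nm$-dimensional manifold $M(n,m)$ has Lebesgue measure zero. Hence $U^c$ is a finite union of null sets, so it has empty interior and $U$ is dense.

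The main subtlety is this final density step. If $\mathrm{Sing}(n,m)$ were only codimension one, the projection $\pi_2(\Phi^{-1}(\mathrm{Sing}(n,m)))$ could cover an open set and density would fail; the codimension-$2$ bound on every rank stratum, together with the elementary fact that smooth images of strictly lower-dimensional submanifolds have empty interior, is exactly what makes the argument go through.
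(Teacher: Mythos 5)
Your proof is correct, and it reaches the conclusion by a genuinely different technical route than the paper, even though both rest on the same structural fact that $\mathrm{Sing}(n,m)$ has codimension at least $2$ when $n>m$. The paper reduces to $k=2$ and collapses the combination to the single segment $\lin(A_1+A_2,\, B+B_2)$, then builds an explicit bi-Lipschitz chart $Q\colon U\times[0,1]\to M(n,m)$ over a ball $U$ in an $(nm-1)$-dimensional affine hyperplane transverse to the segment direction, pulls back $\mathrm{Sing}(n,m)$ using $\mathcal H^{nm-1}(\mathrm{Sing}(n,m))=0$, and projects to conclude that almost every $B'\in U$ gives a non-singular segment through $A_1+A_2$. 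You instead collapse everything to $(1-t)S+tD$ in one step, observe that $\Phi$ is a submersion for $t>0$, stratify $\mathrm{Sing}(n,m)$ by rank, and use transversality together with the elementary fact that the smooth image of a manifold of dimension at most $nm-1$ is Lebesgue-null in $M(n,m)$. The paper's argument is geometric-measure-theoretic (Hausdorff measures, Lipschitz maps), yours is differential-topological (rank stratification, submersion preimages, Sard-type measure-zero images). Yours proves the slightly stronger statement that the bad set is Lebesgue-null in all of $M(n,m)$; the paper's only needs the already-quoted Hausdorff estimate \eqref{eq-Hausdorff-measure} and avoids having to argue that each rank stratum is a smooth submanifold, though that fact is implicitly in the background there too. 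A minor bookkeeping point you glossed over: $(0,1]\times M(n,m)$ has a boundary at $t=1$; one can either work on the open set $(0,1)\times M(n,m)$ and note separately that $\Phi^{-1}(M_k(n,m))\cap(\{1\}\times M(n,m)) = \{1\}\times M_k(n,m)$ has dimension $\leq nm-2$, or note that $\Phi\vert_{t=1}$ is the identity, hence transverse to every stratum, so the preimage is still a manifold with boundary of the stated dimension. Either way the measure-zero conclusion is unaffected.
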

Observe that the condition $\sum_{i=1}^k A_i \in M_+(n,m)$ is necessary to ensure that $\lin(A_1, B)(0) + \sum_{i=2}^k \lin(A_i,B_i)(0)\in M_+(n,m)$.
\begin{proof}
	It is enough to prove the statement for $k=2$. Indeed if $k\geq 3$ we set $A'=A_2+\cdots+A_k$ and $B'=B_2+\cdots +B_k$. Then the combination $\lin(A_1, B)(t) + \sum_{i=2}^k \lin(A_i,B_i)(t)$ equals $\lin(A_1, B)(t) + \lin(A',B')(t)$ and $A_1+A' = \sum_{i=1}^kA_i \in M_+(n,m)$.
	
	The openness follows from the continuity of the map $(B,t)\mapsto \lin(A_1, B)(t) + \lin(A_2,B_2)(t)$, the compactness of $[0,1]$ where $t$ belongs to, and the fact that $M_+(n,m)$ is open.	We can now pass to the density. Let $B_1\in M(n,m)$ and let $U$ be the Euclidean ball of radius $1$ centered at $B_1$ of the affine space $V$ at $B_1$ such that $V+B_2$ is orthogonal to the direction $\lin(A_1+ A_2, B_1+B_2)$. We define the map $Q\colon U \times [0,1] \to M(n,m)$ as $Q(B,t) := \lin(A_1 + A_2,B + B_2)(t)$. This is a bi-Lipschitz embedding. By \eqref{eq-Hausdorff-measure} we deduce that $\mathcal{H}^{nm-1}(Q^{-1} \text{Sing}(n,m)) = 0$. This implies that for almost every $B' \in U$ the whole segment $\lin(A_1 + A_2,B' + B_2)$ lies outside $\text{Sing}(n,m)$, since $U$ has dimension $nm-1$.
\end{proof}

A curve $c\colon [0,1] \to M_+(n,m)$ is said to be piecewise linear if it is the concatenation of $k$ linear segments $\lin(A_0, A_1)$, $\lin(A_1, A_2), \ldots$, $\lin(A_{k-1}, A_k)$ and $c$ is parametrized proportionally to arc-length with respect to the metric $g_{n\times m}$ \eqref{eq-metric-standard} in such a way that $\left\Vert \frac{d}{dt}c(t)\right\Vert_{g_{n\times m}} = L_{g_{n\times m}}(c)$. Here $L_{g_{n\times m}}$ denotes the length of a curve with respect to $g_{n\times m}$. Because of this parametrization we always have:
\begin{equation}
	\label{eq-estimate}
	\int_0^1 \left\Vert \frac{d}{dt}c(t)\right\Vert_{g_{n\times m}}^2 dt = L_{g_{n\times m}}(c)^2.
\end{equation}

The following lemma follows from the bi-Lipschitz equivalence between the Euclidean distance $d_{\text{Eucl}}$ and the distance $\distnm$ on each compact subset of $M_+(n,m)$.
\begin{lem}
	\label{lemma-comparison-length}
	For all compact subsets $K$ of $M_+(n,m)$ and all $\varepsilon > 0$ there exists $\delta > 0$ such that if $\lin(A,B)$ is a linear segment contained in $K$ of Euclidean length at most $\delta$ then $L_{g_{n\times m}}(\lin(A,B))<\varepsilon$.
\end{lem}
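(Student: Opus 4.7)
The plan is to reduce the statement to the uniform continuity of the metric coefficients of $g_{n\times m}$ on the compact set $K$. Concretely, I would bound $g_{n\times m}$ pointwise on $K$ by a constant multiple of the Euclidean (Frobenius) inner product on $M(n,m)$, and then integrate this bound along the segment. No global distance comparison is needed; a fiberwise comparison of inner products, together with compactness of $K$, suffices.

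First, I would observe that the matrix-valued function $A \mapsto (A^TA)^{-1}\sqrt{\det(A^TA)}$ appearing in \eqref{eq-metric-standard} is continuous on $M_+(n,m)$, since full rank guarantees that $A^TA$ is invertible with strictly positive determinant. By compactness of $K$, there exists a constant $C_K>0$ such that
\begin{equation*}
    \|V\|_{g_{n\times m},A}^2 = \tr\bigl(V(A^TA)^{-1}V^T\bigr)\sqrt{\det(A^TA)} \leq C_K\,\|V\|_{\mathrm{Eucl}}^2
\end{equation*}
for every $A\in K$ and every $V\in M(n,m)$, where $\|V\|_{\mathrm{Eucl}} := \sqrt{\tr(VV^T)}$ is the Frobenius norm.

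Next, given a linear segment $\lin(A,B)$ contained in $K$, I would parametrize it by $c(t)=(1-t)A+tB$ for $t\in[0,1]$, so that $c'(t) = B-A$ is constant. Since the length of a curve is independent of parametrization, the pointwise estimate above yields
\begin{equation*}
    L_{g_{n\times m}}(\lin(A,B)) = \int_0^1 \|B-A\|_{g_{n\times m},c(t)}\,dt \leq \sqrt{C_K}\,\|B-A\|_{\mathrm{Eucl}}.
\end{equation*}
The Euclidean length of the segment is exactly $\|B-A\|_{\mathrm{Eucl}}$, hence setting $\delta := \varepsilon/(2\sqrt{C_K})$ forces $L_{g_{n\times m}}(\lin(A,B)) \leq \varepsilon/2 < \varepsilon$ whenever the Euclidean length of the segment is at most $\delta$.

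I do not anticipate any real obstacle. The only point that requires mild care is that the pointwise bound on $\|V\|_{g_{n\times m},A}$ must hold uniformly along the whole segment, but this is automatic since the hypothesis places the entire segment inside the compact set $K$. In particular, one does not need the full bi-Lipschitz equivalence of $d_{\mathrm{Eucl}}$ and $\distnm$ on compact sets alluded to before the lemma; only the half concerning the upper bound of $g_{n\times m}$ by the Euclidean metric on $K$ is used here.
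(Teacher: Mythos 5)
Your proof is correct. The paper does not actually write out a proof of this lemma, only remarking that it follows from the bi-Lipschitz equivalence of $d_{\mathrm{Eucl}}$ and $\distnm$ on compact subsets of $M_+(n,m)$; your argument supplies the natural direct justification behind that remark. In fact your phrasing is the more precise one: a bi-Lipschitz comparison between the \emph{distances} controls infima of lengths between endpoints, not the length of the particular curve $\lin(A,B)$ that the lemma is about, so one really does need a pointwise comparison of the Riemannian tensor $g_{n\times m}$ with the Frobenius inner product along the segment. That is exactly what you establish, via continuity of $A \mapsto (A^TA)^{-1}\sqrt{\det(A^TA)}$ and compactness of $K$, and you correctly observe both that only the upper bound of $g_{n\times m}$ by the Euclidean metric is used and that the hypothesis that the \emph{entire} segment lies in $K$ is what makes the uniform bound applicable throughout the integral.
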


Also the following result is easy.

\begin{lem}
	\label{lemma-epsilon-delta}
	Let $c = \lin(A_0,\ldots,A_k)$ be a piecewise linear curve of $M_+(n,m)$. For all $\varepsilon > 0$ there exists $\delta > 0$ such that if $c' = \lin(A_0', \ldots, A_k')$ is a piecewise linear curve of $M_+(n,m)$ such that $d_{\textup{Eucl}}(A_i,A_i') < \delta$ for every $i=0,\ldots,k$ then $\vert L_{g_{n\times m}}(c') - L_{g_{n\times m}}(c)\vert < \varepsilon$.
\end{lem}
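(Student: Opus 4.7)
The plan is to recognize that the length of a single linear segment $\lin(A,B)$ in $M_+(n,m)$ is a continuous function of the endpoints $(A,B)$, provided the whole segment stays inside the open set $M_+(n,m)$; the lemma then follows by a telescoping estimate over the $k$ segments of $c$.

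First I would fix a compact neighborhood $K$ of the image of $c$ contained in $M_+(n,m)$. Since $\on{Sing}(n,m)$ is closed and disjoint from the compact image of $c$, there exists $\eta>0$ such that the closed Euclidean $\eta$-neighborhood of $c$ lies in $M_+(n,m)$; I take $K$ to be this neighborhood. Then for any $\delta<\eta$ and any perturbation with $d_{\textup{Eucl}}(A_i,A_i')<\delta$ for all $i$, each segment $\lin(A_i',A_{i+1}')$ lies in $K$ by the convexity estimate
\[
d_{\textup{Eucl}}\bigl((1-t)A_i'+tA_{i+1}',\,(1-t)A_i+tA_{i+1}\bigr)\le (1-t)\delta+t\delta=\delta,
\]
and hence entirely in $M_+(n,m)$.

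Second I would use the explicit length formula
\[
L_{g_{n\times m}}(\lin(A,B))=\int_0^1 \sqrt{\langle B-A,\,B-A\rangle_{(1-t)A+tB}}\,dt,
\]
where the integrand is, by \eqref{eq-metric-standard}, a jointly continuous function of $(A,B,t)$ on the open set $\{(A,B,t):(1-t)A+tB\in M_+(n,m)\}$: both $(C^TC)^{-1}$ and $\sqrt{\det(C^TC)}$ depend smoothly on $C\in M_+(n,m)$. Restricted to the compact set $\{(A,B,t)\in K\times K\times [0,1]:(1-t)A+tB\in K\}$ this integrand is uniformly continuous and uniformly bounded, so the map $(A,B)\mapsto L_{g_{n\times m}}(\lin(A,B))$ is continuous at every pair $(A_i,A_{i+1})$.

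To conclude, I would shrink $\delta$ further so that $\bigl|L_{g_{n\times m}}(\lin(A_i',A_{i+1}'))-L_{g_{n\times m}}(\lin(A_i,A_{i+1}))\bigr|<\varepsilon/k$ for every $i=0,\ldots,k-1$; summing these and using that the length of a piecewise linear curve (which is parametrization-independent) equals the sum of the lengths of its segments gives the claim. The only real subtlety is confining the perturbed segments to $K$, which is taken care of by the choice of $\eta$ together with the openness of $M_+(n,m)$; the rest is routine uniform continuity, so I do not expect any serious obstacle.
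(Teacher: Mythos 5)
Your proof is correct, and it takes a genuinely different route from the paper. The paper argues by contradiction: it supposes the conclusion fails along some sequence $c_h = \lin(A_{0,h},\ldots,A_{k,h})$ with $A_{j,h}\to A_j$, observes that $\frac{d}{dt}c_h(t)\to\frac{d}{dt}c(t)$ in $TM_+(n,m)$ for a.e.\ $t$, and then invokes Lebesgue's Dominated Convergence Theorem to force $L_{g_{n\times m}}(c_h)\to L_{g_{n\times m}}(c)$, a contradiction. You instead give a direct $\varepsilon$--$\delta$ argument: you confine the perturbed curve to a compact neighborhood $K\subset M_+(n,m)$ of the image of $c$ via the convexity estimate, use the explicit integral formula for $L_{g_{n\times m}}(\lin(A,B))$ together with the smoothness of $C\mapsto (C^TC)^{-1}$ and $C\mapsto\sqrt{\det(C^TC)}$ on $M_+(n,m)$ to deduce continuity of $(A,B)\mapsto L_{g_{n\times m}}(\lin(A,B))$ by uniform continuity on $K$, and then telescope over the $k$ segments. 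Your version has the small advantage of working entirely with the linear parametrization of each segment, which sidesteps any bookkeeping about the constant-speed reparametrization that the paper's definition of a piecewise linear curve builds in (the paper's argument is still sound because $L_{g_{n\times m}}$ is parametrization-independent and one may compute it in the uniform parametrization, but this is left implicit there). The paper's version is shorter once the DCT setup is accepted; yours is more elementary and self-contained, producing an explicit modulus of continuity for the segment-length map on a compact set.
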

\begin{proof}
	Suppose it is not the case: we can find piecewise linear curves $c_h = \lin(A_{0,h}, \ldots, A_{k,h})$ such that $A_{j,h}$ converges to $A_j$ for every $j=0,\ldots,k$ but $L_{g_{n\times m}}(c_h)$ does not converge to $L_{g_{n\times m}}(c)$. However $\frac{d}{dt} c_h (t)$ converges to $\frac{d}{dt} c(t)$ in the tangent bundle $TM_+(n,m)$ for almost every $t \in [0,1]$, so by Lebesgue's Convergence Theorem we get $\lim_{h\to +\infty}L_{g_{n\times m}}(c_h) = L_{g_{n\times m}}(c)$, a contradiction.
\end{proof}
Finally we show that $\distnm$ can be computed using piecewise linear curves.
\begin{lem}
	\label{lemma-piecewise-approximation}
	Let $c\colon [0,1]\to M_+(n,m)$ be a piecewise $C^1$-curve and $\varepsilon > 0$. Then there exists a piecewise linear curve $c_\varepsilon$ with same endpoints of $c$ and such that $\vert L_{g_{n\times m}}(c_\varepsilon) - L_{g_{n\times m}}(c) \vert < \varepsilon$.
\end{lem}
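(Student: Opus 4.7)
The plan is a classical polygonal approximation argument, exploiting compactness of the trace of $c$ inside the open set $M_+(n,m)$ together with uniform continuity of the $C^1$-pieces of $c$ and of the Riemannian norm in a neighborhood of the trace.

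First I would choose a partition $0 = t_0 < t_1 < \cdots < t_k = 1$ of $[0,1]$ refining the finite set of points where $c$ fails to be $C^1$, set $A_i := c(t_i)$, and define $c_\varepsilon$ as the piecewise linear curve $\lin(A_0, A_1, \ldots, A_k)$ (to be reparametrized proportionally to arc-length at the end). Since $c([0,1])$ is compact and contained in the open set $M_+(n,m)$, there is a compact set $K \subset M_+(n,m)$ whose Euclidean interior contains $c([0,1])$. By uniform continuity of $c$, if the mesh of the partition is small enough then each segment $\lin(A_{i-1}, A_i)$ is contained in $K$; in particular $c_\varepsilon$ is well-defined as a curve in $M_+(n,m)$ and has the same endpoints as $c$.

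Next I would compare the two lengths via a Riemann sum. On every $C^1$-piece of $c$ we have
\begin{equation}
L_{g_{n\times m}}\bigl(c\vert_{[t_{i-1},t_i]}\bigr) = \int_{t_{i-1}}^{t_i} \Bigl\Vert \tfrac{d}{dt}c(t)\Bigr\Vert_{g_{n\times m}(c(t))}\, dt,
\end{equation}
and for the linear segment, parametrizing $\lin(A_{i-1},A_i)$ on $[t_{i-1}, t_i]$ at constant speed $(A_i - A_{i-1})/(t_i - t_{i-1})$,
\begin{equation}
L_{g_{n\times m}}\bigl(\lin(A_{i-1}, A_i)\bigr) = \int_{t_{i-1}}^{t_i} \Bigl\Vert \tfrac{A_i - A_{i-1}}{t_i - t_{i-1}}\Bigr\Vert_{g_{n\times m}(\lin(A_{i-1},A_i)(s))}\, ds.
\end{equation}
Now the norm $(A,V)\mapsto \Vert V \Vert_{g_{n\times m}(A)}$ is continuous, hence uniformly continuous on the compact set $K \times \{V : \Vert V\Vert \le R\}$ for any $R$, and on each $C^1$-piece $t\mapsto c'(t)$ is uniformly continuous. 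Combined with $(A_i-A_{i-1})/(t_i - t_{i-1}) \to c'(t_{i-1})$ uniformly in $i$ as the mesh tends to $0$, both $L_{g_{n\times m}}(c)$ and $L_{g_{n\times m}}(c_\varepsilon)$ lie within $\varepsilon/2$ of the Riemann sum $\sum_i \Vert c'(t_{i-1})\Vert_{g_{n\times m}(c(t_{i-1}))}(t_i - t_{i-1})$ for fine enough mesh, and the desired bound follows from the triangle inequality. Finally, reparametrize $c_\varepsilon$ proportionally to arc-length as prescribed by the definition of piecewise linear curve; this does not change its length.

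The main technical point is merely the uniform control of the norm $\Vert \cdot \Vert_{g_{n\times m}(\cdot)}$ across the compact neighborhood $K$ of the trace of $c$, which reduces to continuity of the coefficients of the metric \eqref{eq-metric-standard} and nondegeneracy of $A^TA$ on $K$; no novelty is required beyond this standard Riemannian calculus. Observe also that the argument uses only the fact that $c$ stays in $M_+(n,m)$ and is piecewise $C^1$, so Lemma~\ref{lemma-combination} is not needed here; the latter will instead enter later, when one needs to perturb polygonal endpoints to land inside $M_+(n,m)$ while preserving pointwise evaluation constraints arising from the integral formula \eqref{eq-metric-g0}.
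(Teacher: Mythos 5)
Your proof is correct and follows the same overall scheme as the paper: take a fine partition of $[0,1]$, pass to the inscribed polygonal curve $c_k=\lin(c(t_0),\ldots,c(t_k))$, note that it lies in $M_+(n,m)$ for small enough mesh by openness of $M_+(n,m)$ and compactness of the trace $c([0,1])$, and show $L_{g_{n\times m}}(c_k)\to L_{g_{n\times m}}(c)$. The only real difference is in the convergence step: the paper observes that $\frac{d}{dt}c_k(t)\to\frac{d}{dt}c(t)$ for almost every $t$ and invokes Lebesgue's convergence theorem (the same mechanism used in Lemma~\ref{lemma-epsilon-delta}), whereas you argue via uniform continuity of the norm $(A,V)\mapsto\Vert V\Vert_{g_{n\times m}(A)}$ on a compact neighborhood $K$ of the trace, comparing both lengths to a common Riemann sum. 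These are interchangeable here for the same underlying reason — compactness of the trace gives the needed uniform bound — and your variant is slightly more explicit about why the passage to the limit of lengths is legitimate, at the cost of a bit more bookkeeping with the mesh.
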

\begin{proof}
	Let $0=t_0<t_1<\ldots < t_k = 1$ be a partition of $[0,1]$. If the partition is thin enough then the linear segments $\lin(c(t_i), c(t_{i+1}))$ belong to $M_+(n,m)$ for every $i=0,\ldots,k-1$ because $M_+(n,m)$ is open and $c([0,1])$ is compact. This allow us to define the piecewise linear curve $c_k = \lin(c(t_0), c(t_1), \ldots, c(t_k))$ in $M_+(n,m)$ associated to each thin enough partition. The sequence $\frac{d}{dt} c_k(t)$ converges to $\frac{d}{dt} c(t)$ in $TM_+(n,m)$ for almost every $t\in [0,1]$ (namely all $t$'s that do not belong to $\bigcup_k \lbrace t_k \rbrace$ and to the non-differentiable points of $c$), so $\lim_{k\to +\infty}L_{g_{n\times m}}(c_k) =  L_{g_{n\times m}}(c)$.
\end{proof}

\subsection{The proof of Theorem \ref{theo-intro-dist-equality}}\label{subsec.dist.inequality}
In this section, we prove the main result of the paper. One of the two inequalities is easy and it was already proved in \cite[Theorem~4.5]{bauer2021oneforms}. Indeed it is there shown that
\begin{align}\label{eq.distO.inequality}
	\distO(\alpha,\beta)^2\geq\int_Md_x(\alpha(x),\beta(x))^2d\mu_{g_0}(x)
\end{align}
for $\alpha, \beta\in\Omega^1_+(M,\Rn)$. 
\begin{proof}[Proof of Theorem \ref{theo-intro-dist-equality}]
	We just need to prove the reverse inequality in \eqref{eq.distO.inequality}.	Let $\alpha,\beta \in \Omega^1_+(M,\mathbb{R}^n)$. By openness of $M_+(n,m)$ in $M(n,m)$ and by continuity of $\alpha$ and $\beta$, for all $x\in M$ and all $\varepsilon > 0$ we can find a neighbourhood $U_x^\varepsilon$ of $x$ supporting a standard trivialization $\varphi_x^\varepsilon$ such that 
	\begin{itemize}
		\item[(i)] the segments $\lin\left(\varphi_x^\varepsilon\vert_x(\alpha(x)), \varphi_x^\varepsilon\vert_y(\alpha(y))\right)$ and $\lin\left(\varphi_x^\varepsilon\vert_x(\beta(x)), \varphi_x^\varepsilon\vert_y(\beta(y))\right)$ are contained in $M_+(n,m)$ for all $y\in U_x^\varepsilon$;
		\item[(ii)] the lengths of the segments above is smaller than $\varepsilon$ for all $y\in U_x^\varepsilon$, namely
		$$ L_{g_{n\times m}}\left(\lin(\varphi_x^\varepsilon\vert_x(\alpha(x)), \varphi_x^\varepsilon\vert_y(\alpha(y)))\right) < \varepsilon$$
		and the same for $\beta$.
	\end{itemize}
	For every $x\in M$ we can apply Lemma \ref{lemma-piecewise-approximation} to find a piecewise linear curve $c_x^\varepsilon \subseteq M_+(n,m)$ with endpoints $\varphi_x^\varepsilon\vert_x(\alpha(x))$ and $\varphi_x^\varepsilon\vert_x(\beta(x))$ and such that 
	$$L_{g_{n\times m}}(c_x^\varepsilon)<\distnm(\varphi_x^\varepsilon\vert_x(\alpha(x)),\varphi_x^\varepsilon\vert_x(\beta(x))) + \varepsilon.$$
	By compactness we extract a finite covering $\lbrace U_i^\varepsilon \rbrace$ from the covering $\lbrace U_x^\varepsilon \rbrace$. Let us call $\varphi_i^\varepsilon$ the trivializing chart for $U_i^\varepsilon$ and $c_i^\varepsilon = \lin(A_{0,i},\ldots,A_{k(i),i})$ the piecewise linear curve associated to this neighbourhood. By finiteness we can suppose without loss of generality that $k(i)=k$ for each $i$, maybe adding some constant subpath. We define $\Gamma_i^\varepsilon \colon [0,1] \to \Omega^1_+(U_i^\varepsilon,\mathbb{R}^n)$ by
	\begin{equation*}
		\begin{aligned}
			\Gamma_i^\varepsilon(\cdot)(y) &= \lin\left(\alpha(y), \varphi_i^{\varepsilon}\vert_y^{-1}(A_{0,i}),\ldots,\varphi_i^{\varepsilon}\vert_y^{-1}(A_{k,i}),\beta(y)\right)\\
			&= \varphi_i^{\varepsilon}\vert_y^{-1}\left(\lin\left(\varphi_i^{\varepsilon}\vert_y(\alpha(y)), A_{0,i}, \ldots, A_{k,i}, \varphi_i^{\varepsilon}\vert_y(\beta(y))\right)\right)
		\end{aligned}
	\end{equation*}
	Condition (i) guarantees that this curve is effectively full-ranked. Moreover $\Gamma_i^\varepsilon$ is piecewise $C^1$ and satisfies $\Gamma_i^\varepsilon(0) = \alpha\vert_{U_i^\varepsilon}$, $\Gamma_i^\varepsilon(1) = \beta\vert_{U_i^\varepsilon}$ and $$L_{g_y}(\Gamma_i^\varepsilon(\cdot)(y)) < d_y(\alpha(y),\beta(y)) + 3\varepsilon$$ 
	for all $y \in U_i^\varepsilon$ by (ii).
	
	Let $\lbrace \rho_i^\varepsilon \rbrace$ be a partition of unity associated to the covering $\lbrace U_i^\varepsilon \rbrace$. 
	We define the map $\Gamma^\varepsilon \colon [0,1] \to \Omega^1(M,\mathbb{R}^n)$ by $t\mapsto (y\mapsto \Sigma_i\rho_i^\varepsilon(y)\Gamma_i^\varepsilon(t)(y))$. It is piecewise $C^1$ but it could happen that $\Gamma^\varepsilon(t)(y)$ is not full-ranked for some $(t,y)\in [0,1]\times M$.
	Let $\delta > 0$ be the number given by Lemma \ref{lemma-epsilon-delta} relative to $\varepsilon$ and all the paths $\lin(A_{0,i}, \ldots,A_{k,i}) \subseteq M_+(n,m)$. \\
	\textbf{Claim:} we can find points $\tilde{A}_{0,i}, \ldots, \tilde{A}_{k,i} \in M_+(n,m)$ with $d_\text{Eucl}(A_{j,i}, \tilde{A}_{j,i}) < \delta$ in such a way that if we define the path 
	$$\tilde{\Gamma}_i^\varepsilon(\cdot)(y) := \lin\left(\alpha(y), \varphi_i^{\varepsilon}\vert_y^{-1}(\tilde{A}_{0,i}),\ldots,\varphi_i^{\varepsilon}\vert_y^{-1}(\tilde{A}_{k,i}),\beta(y)\right)$$
	then the section $\tilde{\Gamma}^\varepsilon(t)(y) := \Sigma_i\rho_i(y)\tilde{\Gamma}_i^\varepsilon(t)(y) \in \Omega^1(M,\RR^n)$ is full ranked for all $t\in [0,1]$ and $y\in M$. 
	
	For $y\in M$ we define $I_y$ to be the set of indices $i$ such that $\rho_i^\varepsilon(y)>0$, in particular $y\in U_i^\varepsilon$. We define also $p_y = \max I_y$ and for $p\in \NN$ we set $M_p = \lbrace y \in M \text{ s.t. } p_y = p\rbrace$. Clearly $M = \bigcup_{p \in \NN}M_p$. We define $\tilde{A}_{0,p}$ by induction on $p$. For $p=1$ we set $\tilde{A}_{0,1} = A_{0,1}$. For $p\geq 2$ we proceed as follows. Let $y\in M_p$. For all $i\in I_y$ we set $B_{0} = \varphi_p^\varepsilon\vert_y(\alpha(y))$ and $C_{0,i} = \varphi_p^\varepsilon\vert_y\varphi_i^\varepsilon\vert_y^{-1}(\tilde{A}_{0,i})$.
	We consider the set $\mathcal{A}_{0,p}(y)$ defined as
	$$\left\lbrace C \in M(n,m) \text{ s.t. } \rho_p^\varepsilon(y)\lin(B_{0},C) + \sum_{i\in I_y \setminus \lbrace p \rbrace}\rho_i^\varepsilon(y)\lin(B_{0},C_{0,i}) \subseteq M_+(n,m) \right\rbrace.$$
	Since 
	$$\sum_{i\in I_y} \rho_i^\varepsilon(y)B_{0} = \varphi_p^\varepsilon\vert_y\left(\sum_{i\in I_y}\rho_i^\varepsilon(y)\alpha(y)\right) = \varphi_p^\varepsilon\vert_y(\alpha(y)) \in M_+(n,m)$$
	we can apply Lemma \ref{lemma-combination} to conclude that $\mathcal{A}_{0,p}(y)$ is open and dense in $M(n,m)$. Moreover by openness of $M_+(n,m)$ and the continuity of all functions involved we can find a neighbourhood $V_y$ of $y$ in $M_p$ such that $\mathcal{A}_{0,p}(y') \supseteq \mathcal{A}_{0,p}(y)$ for all $y'\in V_y$. We cover $M_p$ by countable open sets $V_{y_\ell}$ as above. For all $y\in M_p$ we have $\mathcal{A}_{0,p}(y) \supseteq \bigcap_\ell \mathcal{A}_{0,p}(y_\ell)$, the latter being open and dense in $M(n,m)$ because $M(n,m)$ is a Baire space. We take as $\tilde{A}_{0,p}$ any point of $M(n,m)$ such that $d_\text{Eucl}(\tilde{A}_{0,p}, A_{0,p}) < \delta$ and $\tilde{A}_{0,p} \in \bigcap_\ell \mathcal{A}_{0,p}(y_\ell)$.
	
	Let us show that the first segment of the path $\tilde{\Gamma}$ defined using $\tilde{A}_{0,i}$ in place of $A_{0,i}$ is full ranked. Let $t\in [0,\frac{1}{k+2}]$ and $y\in M$. In order to show that $\tilde{\Gamma}(t)(y)$ is full ranked it is enough to check that $\varphi_p^\varepsilon\vert_y(\tilde{\Gamma}(t)(y)) \in M_+(n,m)$ for $p=p_y$. We have
	\begin{equation}
		\label{eq-induction}
		\begin{aligned}
			\varphi_p^\varepsilon\vert_y(\tilde{\Gamma}(t)(y)) &= \varphi_p^\varepsilon\vert_y\left(\sum_{i\in I_y} \rho_i^\varepsilon(y) \cdot \lin\left(\alpha(y), \varphi_i^\varepsilon\vert_y^{-1}(\tilde{A}_{0,i})\right)(t)\right) \\
			&= \sum_{i\in I_y} \lin\left(\rho_i^\varepsilon(y)B_0, \rho_i^\varepsilon(y)C_{0,i}\right)(t) \\
			&= \rho_p^\varepsilon(y)\lin\left(B_{0},\tilde{A}_{0,p}\right)(t) + \sum_{i\in I_y \setminus \lbrace p \rbrace}\rho_i^\varepsilon(y)\lin\left(B_{0},C_{0,i}\right)(t)
		\end{aligned}
	\end{equation}
	and the latter belongs to $M_+(n,m)$ by construction.
	
	We now proceed to define $\tilde{A}_{1,p}$ by induction on $p$. The construction is the same, the only difference is that we define $B_{0,i} = \varphi_p^\varepsilon\vert_y\varphi_i^\varepsilon\vert_y^{-1}(\tilde{A}_{0,i})$ instead of $B_0$. So we can define the set $\mathcal{A}_{1,p}$ similarly to $\mathcal{A}_{0,p}$. We have to check that we can apply Lemma \ref{lemma-combination} to get that $\mathcal{A}_{1,p}$ is open and dense. But by \eqref{eq-induction} we get
	\begin{equation*}
		\begin{aligned}
			\sum_{i\in I_y} \rho_i^\varepsilon(y)B_{0,i} &= \varphi_p^\varepsilon\vert_y\left(\sum_{i\in I_y}\rho_i^\varepsilon(y)\varphi_i^\varepsilon\vert_y^{-1}(\tilde{A}_{0,i})\right)\\
			&= \varphi_p^\varepsilon\vert_y\left(\tilde{\Gamma}\left(\frac{1}{k+2}\right)(y)\right) \in M_+(n,m).
		\end{aligned}
	\end{equation*}
	Proceeding by induction on $j$ we can prove the claim, paying attention to the case $j=k$ in order to ensure that also the last segments from $\varphi_i^\varepsilon\vert_y^{-1}\tilde{A}_{k,i}$ to $\beta(y)$ is full-ranked. But this can be done by intersecting the open and dense set $\mathcal{A}_{k,p}$ to the open and dense set defined as $\mathcal{A}_{0,p}$ with $\beta$ in place of $\alpha$.
	
	The path $\tilde{\Gamma}^\varepsilon$ we found above has the following properties: it is full ranked, i.e. it has values in $\Omega^1_+(M,\mathbb{R}^n)$, $\tilde{\Gamma}^\varepsilon(0) = \alpha$ and $\tilde{\Gamma}^\varepsilon(1) = \beta$. Moreover 
	$$L_{g_y}(\tilde{\Gamma}_i^\varepsilon(\cdot)(y)) \leq L_{g_y}(\Gamma_i^\varepsilon(\cdot)(y)) + 3\varepsilon < d_y(\alpha(y),\beta(y)) + 4\varepsilon$$ 
	for all $y\in M$ such that $i\in I_y$ because of the choice of $\delta$, Lemma \ref{lemma-epsilon-delta} and the fact that the map $\varphi_i^\varepsilon\vert_y$ is an isometry between $g_y$ and $g_{n\times m}$.
	
	We can now estimate the distance between $\alpha$ and $\beta$ using this curve:
	\begin{align}
		\distO(\alpha, \beta)^2 &\leq \left(\int_0^{1} \left\Vert \frac{d}{dt}\tilde{\Gamma}^\varepsilon(t)(\cdot)\right\Vert_{G} dt\right)^2 \\
		&\leq \int_0^{1}\left\Vert \frac{d}{dt}\tilde{\Gamma}^\varepsilon(t,\cdot)\right\Vert_{G}^2 dt\\
		&= \int_0^{1} \left\Vert \Sigma_{i}\rho_i^\varepsilon(\cdot)\frac{d}{dt}\tilde{\Gamma}_i^\varepsilon(t)(\cdot)\right\Vert_{G}^2 dt \\
		&=\int_0^{1}\int_M \left\Vert \Sigma_{i}\rho_i^\varepsilon(y)\frac{d}{dt}\tilde{\Gamma}_i^\varepsilon(t)(y)\right\Vert_{g_y}^2d\mu_{g_0}(y)\\
		&\leq \int_0^{1} \int_M (\Sigma_{i\in I_y}\rho_i^\varepsilon(y))^2 \cdot\max_{i\in I_y}\left\Vert\frac{d}{dt}\tilde{\Gamma}_i^\varepsilon(t)(y)\right\Vert_{g_y}^2 d\mu_{g_0}(y) dt \\
		&= \int_M \int_0^{1} \max_{i\in I_y}\left\Vert\frac{d}{dt}\tilde{\Gamma}_i^\varepsilon(t)(y)\right\Vert_{g_y}^2 dt d\mu_{g_0}(y),
	\end{align}
	where the first inequality follows by Cauchy-Schwarz inequality.
	For all $i\in I_y$ we have $$\int_0^{1}\left\Vert\frac{d}{dt}\tilde{\Gamma}_i^\varepsilon(t)(y)\right\Vert_{g_y}^2 \leq L_{g_y}(\tilde{\Gamma}_i^\varepsilon(t)(y))^2 \leq (d_y(\alpha(y),\beta(y))+ 4\varepsilon)^2$$
	because of \eqref{eq-estimate}. Therefore
	$$\distO(\alpha,\beta) \leq \int_M (d_y(\alpha(y),\beta(y))+ 4\varepsilon)^2 d\mu_{g_0}(y).$$
	The thesis follows by taking $\varepsilon$ going to $0$.
\end{proof}

\begin{rem}
	The same proof, without all the approximation details, works word by word to the case of the $L^2$-metric on the space of Riemannian metrics, providing a new, short proof of Theorem 3.8 of \cite{clarke2013geodesics}. Since this computation is of independent interest, it is explicitly done in Appendix~\ref{appendix:Ebin}.
\end{rem}

\subsection{The completion of the space of full-ranked one-forms}
In the finite dimensional case, any Riemannian tensor always induces a metric space structure on the manifold. However, this does not always hold in the infinite dimensional case. Examples of such Riemannian tensors with induced vanishing geodesic distances can be found in \cite{eliashberg1993bi,michor2005vanishing,bauer2013geodesic,bauer2018vanishing}, where the induced geodesic distances fail to be positive definite. Moreover, it is also not clear a priori that the induced geodesic distance between two points is actually finite. Theorem \ref{theo-intro-dist-equality} says that $\distO$ is a true distance.
\begin{cor}
	The function $\distO$ on $\Omega^1_+(M,\Rn)$ is a finite distance.
\end{cor}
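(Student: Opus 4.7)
The plan is to read this off directly from Theorem \ref{theo-intro-dist-equality}. Symmetry of $\distO$ and the triangle inequality are automatic from the definition of $\distO$ as an infimum of lengths of piecewise $C^1$ paths in $\Omega^1_+(M,\Rn)$, so only two properties remain: finiteness of $\distO(\alpha,\beta)$ for arbitrary $\alpha,\beta$, and positive definiteness (non-degeneracy).

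For finiteness, I would invoke Theorem \ref{theo-intro-dist-equality} to write
\begin{equation*}
\distO(\alpha,\beta)^2 = \int_M d_x(\alpha(x),\beta(x))^2\, d\mu_{g_0}(x),
\end{equation*}
and show that the integrand is a bounded function on $M$. Cover $M$ by finitely many open sets $U_k$ supporting standard trivializations $\varphi_k$. On each $U_k$, the isometry $\varphi_k\vert_x$ gives $d_x(\alpha(x),\beta(x)) = \distnm(\varphi_k\vert_x(\alpha(x)),\varphi_k\vert_x(\beta(x)))$. Since $\alpha,\beta$ are smooth sections and $\distnm$ is continuous on the connected manifold $M_+(n,m)$, this is a continuous function of $x \in U_k$; patching and using compactness of $M$ gives a uniform bound, hence the integral is finite.

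For positive definiteness, suppose $\distO(\alpha,\beta) = 0$. Theorem \ref{theo-intro-dist-equality} then gives $\int_M d_x(\alpha(x),\beta(x))^2\, d\mu_{g_0}(x) = 0$, so by the continuity argument above the integrand vanishes identically. Since each $d_x$ is a genuine Riemannian distance on the finite-dimensional manifold $(\Rn\otimes T_x^*M)_+$, the equation $d_x(\alpha(x),\beta(x)) = 0$ forces $\alpha(x)=\beta(x)$ at every $x\in M$, hence $\alpha=\beta$.

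The only nontrivial point is the continuity of $x\mapsto d_x(\alpha(x),\beta(x))$, which is needed both to ensure the integrand is bounded and to pass from a.e.\ vanishing to pointwise vanishing; but this reduces via a standard trivialization to the continuity of $\distnm$ on $M_+(n,m)$, which is the standard continuity of a Riemannian distance in finite dimensions. So no real obstacle arises, and the corollary is an almost immediate consequence of the main theorem.
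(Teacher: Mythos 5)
Your proof is correct and follows essentially the same route as the paper: finiteness comes from Theorem~\ref{theo-intro-dist-equality} together with continuity of $x\mapsto d_x(\alpha(x),\beta(x))$ and compactness of $M$, which is exactly what the paper invokes. The only cosmetic difference is that the paper cites the already-known inequality \eqref{eq.distO.inequality} from \cite{bauer2021oneforms} for positive definiteness, whereas you re-derive it directly from the equality via the same continuity argument; both are valid.
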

\begin{proof}
	The positive definiteness of $\distO$ was already observed in \cite{bauer2021oneforms} as a consequence of \eqref{eq.distO.inequality}, while the finiteness follows from Theorem \ref{theo-intro-dist-equality} because of the continuity of the map $y\mapsto d_y(\alpha(y), \beta(y))$ for $\alpha,\beta \in \Omega^1_+(M,\Rn)$ and the compactness of $M$.
\end{proof}
In \cite[Theorem 4.9]{bauer2021oneforms} it is shown that the metric space $(\Omega^1_+(M,\Rn), \distO)$ is always incomplete by computing explicitly the geodesics of this metric space. A more refined expression for these geodesics can be found in Appendix~\ref{appendix:geo}.

Our next goal is to give a description of the completion of the space of full-ranked one-forms $(\Omega^1_+(M,\Rn), \distO)$, analogue to the one given by the first author in \cite{cavallucci2022} for the space of Riemannian metrics. 
\begin{thm}
	\label{theo-completion}
	Let $M$ be a compact, orientable, $m$-dimensional smooth manifold and let $n>m$. Then the metric completion of $(\Omega^1_+(M,\Rn), \distO)$ is isometric to $(L^2(M,\overline{M_+(n,m)}), d_{L^2})$. In particular it depends only on the dimension of $M$ and on $n$.
\end{thm}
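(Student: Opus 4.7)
The plan is to construct an isometric embedding $\Phi \colon \Omega^1_+(M,\Rn) \to L^2(M, \overline{M_+(n,m)})$ with dense image and then conclude by taking completions. Since the target $L^2(M, \overline{M_+(n,m)})$ is already complete (as the $L^2$-space of functions valued in a complete metric space), this will identify the completion of $(\Omega^1_+(M,\Rn), \distO)$ with it. The essential input is Theorem \ref{theo-intro-dist-equality}, which reduces the global distance to a pointwise integral.

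First I would fix a finite open cover $\{U_i\}_{i=1}^N$ of $M$ by domains of standard trivializations $\varphi_i$ and choose a measurable partition $\{M_i\}_{i=1}^N$ with $M_i \subseteq U_i$ (built inductively from the cover, which exists by compactness). For $\alpha \in \Omega^1_+(M,\Rn)$ I define $\Phi(\alpha)(x) := \varphi_i\vert_x(\alpha(x))$ for $x \in M_i$; this is a measurable map into $M_+(n,m) \subseteq \overline{M_+(n,m)}$ with bounded image, hence in $L^2$. Since each $\varphi_i\vert_x$ is an isometry between $((\Rn\otimes T^*_xM)_+, d_x)$ and $(M_+(n,m), \distnm)$, Theorem \ref{theo-intro-dist-equality} gives
\begin{equation}
\distO(\alpha,\beta)^2 = \int_M d_x(\alpha(x),\beta(x))^2 d\mu_{g_0}(x) = \int_M \distnm(\Phi(\alpha)(x),\Phi(\beta)(x))^2 d\mu_{g_0}(x) = d_{L^2}(\Phi(\alpha), \Phi(\beta))^2,
\end{equation}
so $\Phi$ is an isometric embedding. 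Different choices of cover and partition produce isometric (and compatible) embeddings, as transition maps between standard trivializations are fiberwise isometries.

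Next I would establish density of $\Phi(\Omega^1_+(M,\Rn))$ in $L^2(M,\overline{M_+(n,m)})$. Given $f \in L^2(M,\overline{M_+(n,m)})$ and $\varepsilon > 0$, I first approximate $f$ in $L^2$ by a simple function $f' = \sum_{j=1}^k \mathbf{1}_{E_j} A_j$ with $A_j \in M_+(n,m)$; this is possible by the standard simple-function approximation together with the density of $M_+(n,m)$ in its completion. Then I approximate $f'$ by a smooth section $\alpha \in \Omega^1_+(M,\Rn)$ by taking a smooth partition of unity on slight thickenings of the $E_j$ and smoothing the locally-constant-in-trivialization sections $x \mapsto \varphi_i\vert_x^{-1}(A_j)$. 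The obstruction is that convex combinations across transition zones might fail to be full-ranked; this is precisely where I would apply the Key Lemma \ref{lemma-combination} to perturb each $A_j$ slightly in $M(n,m)$ so that every occurring convex combination stays in $M_+(n,m)$. The construction is a simpler static analogue of the perturbation argument already used for $\tilde{\Gamma}^\varepsilon$ in the proof of Theorem \ref{theo-intro-dist-equality}.

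The main obstacle is this density step: ensuring that the smooth approximation of a step function with values in $M_+(n,m)$ remains everywhere full-ranked. Once density is established, $\Phi$ extends uniquely to a surjective isometry from the completion of $(\Omega^1_+(M,\Rn),\distO)$ onto $(L^2(M,\overline{M_+(n,m)}),d_{L^2})$, proving the theorem. In particular, since the target depends only on $n$ and $m$, so does the completion.
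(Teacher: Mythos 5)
Your proposal is correct and shares the same backbone as the paper's proof: both rely on Theorem \ref{theo-intro-dist-equality} to identify $\distO$ with the $L^2$-distance of the fiberwise metric, then reduce the claim to showing that $\Omega^1_+(M,\Rn)$ is dense in $L^2(M,\overline{M_+(n,m)})$, with the perturbation Lemma \ref{lemma-combination} as the crucial tool for keeping convex combinations full-ranked. Where you differ is in the organization of the density step. The paper follows \cite{cavallucci2022}, passing through a chain of dense inclusions $\mathcal{E}_{C^\infty}\subseteq\mathcal{E}_{C^0}\subseteq\mathcal{E}_{L^2}\subseteq\overline{\mathcal{E}}_{L^2}$ (smooth, continuous, $L^2$, and $L^2$-with-completed-fibers sections of the bundle), invoking Tietze's extension theorem for the first inclusion, and only at the very end identifying $\overline{\mathcal{E}}_{L^2}$ with $L^2(M,\overline{M_+(n,m)})$ via trivializations. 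You instead fix the trivializing measurable partition at the outset, embed directly into $L^2(M,\overline{M_+(n,m)})$, and approximate an arbitrary $L^2$-map by simple functions with values in $M_+(n,m)$, which you then smooth by a partition of unity. This is a bit more direct, avoids Tietze, and isolates more transparently the one real obstacle, which both proofs resolve the same way. Your identity $\Phi^*d_{L^2}=\distO$ requires, as you note, that transition maps between standard trivializations be fiberwise isometries of $g_{n\times m}$; this does hold because a transition acts by right multiplication by a Jacobian of unit determinant, which rescales \eqref{eq-metric-standard} by $|\det J|=1$. One point worth spelling out in the smoothing step: after the perturbation the values taken over the transition zones must lie in a fixed compact subset of $M_+(n,m)$, so the $\distnm$-diameter there is bounded and the $L^2$-error is controlled once the zones have small measure; this is implicit in your sketch but is where the Euclidean-$\delta$ control in the Key Lemma is really used.
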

The space $L^2(M,\overline{M_+(n,m)})$ appearing in the statement of the theorem is the space of $L^2$-maps from $M$ to the metric completion $\overline{M_+(n,m)}$ of $(M_+(n,m), \distnm)$, i.e. the set of maps $f\colon M \to \overline{M_+(n,m)}$ such that $\int_M \distnm(f(x), A)^2d\mu_{g_0}(x) < +\infty$ for some (hence any) $A\in \overline{M_+(n,m)}$ modulo the equivalence relation $f\sim g$ if and only if $f(x)=g(x)$ for $\mu_{g_0}$-a.e. $x\in M$.
Moreover the distance $d_{L^2}$ on $L^2(M,\overline{M_+(n,m)})$ is the well defined expression:
$$d_{L^2}(f,g)^2 = \int_M \distnm(f(x), g(x))^2d\mu_{g_0}(x).$$
\begin{proof}
	The proof of this theorem follows verbatim the scheme of \cite{cavallucci2022}. We just sketch the main steps and highlight a technical detail. As in \cite{cavallucci2022} we denote by $\mathcal{E}_{C^\infty}$ (resp. $\mathcal{E}_{C^0}$ , $\mathcal{E}_{L^2}$) the set of smooth (resp. continuous, $L^2$) sections of the fiber bundle $(\Rn\otimes T^*M)_+$. 
	We equip all these spaces with the $L^2$-metric
	$$d_{L^2}(\alpha, \beta)^2 :=
	\int_M d_y(\alpha(y), \beta(y))^2d\mu_{g_0}(y),$$
	which, by Theorem \ref{theo-intro-dist-equality}, coincides with $\distO$ on $\mathcal{E}_{C^\infty} = \Omega^1_+(M,\Rn)$. The next step is to show that $\mathcal{E}_{C^\infty}$ is dense in $\mathcal{E}_{L^2}$ with respect to the $L^2$-metric. This can be done in two steps: first we show that $\mathcal{E}_{C^\infty}$ is dense in $\mathcal{E}_{C^0}$ and then that $\mathcal{E}_{C^0}$ is dense in $\mathcal{E}_{L^2}$. In \cite[Theorem 2]{cavallucci2022} these facts are proved using the Tietze's Extension Theorem in the version of \cite[Theorem 4.1]{Dug51} and classical approximation procedures with partitions of unity. In our case we can use the same proof to approximate a $L^2$-section by a continuous one, but which is not a priori full-ranked. The same problem appears when we use partitions of unity. This technical difficulty can be overcome arguing as in the proof of Theorem \ref{theo-intro-dist-equality}. Indeed we can further approximate these possibly non-full-ranked continuous sections by full-ranked ones. The details are omitted. We showed that $\Omega^1_+(M,\Rn) = \mathcal{E}_{C^\infty}$ is dense in $\mathcal{E}_{L^2}$ with the $L^2$-metric.
	
	At this point we can continue as in \cite{cavallucci2022}. We can now define another space. The fiber at $x\in M$ of the fiber bundle $(\Rn \otimes T^*M)_+$ is equipped with the metric $d_x$, which is isometric to $d_{n\times m}$. We can take a new fiber bundle whose fiber at $x$ is the completion of $d_x$, which is isometric to $\overline{M_+(n,m)}$. The space of $L^2$-sections of this new fiber bundle, equipped with the $L^2$-distance, is denoted by $\overline{\mathcal{E}}_{L^2}$. We have a natural inclusion $\mathcal{E}_{L^2} \subseteq \overline{\mathcal{E}}_{L^2}$, and we can show as in \cite[Theorem 2]{cavallucci2022} that $\mathcal{E}_{L^2}$ is actually dense.
	
	The last step is to show as in \cite[Theorem 3]{cavallucci2022} that $\overline{\mathcal{E}}_{L^2}$ with the $L^2$-metric is isometric to the space $L^2(M,\overline{M_+(n,m)})$. In particular it is complete, so it is the completion of $\Omega^1_+(M,\Rn)$.

	Finally the space $(L^2(M,\overline{M_+(n,m)}), d_{L^2})$ is isometric to $L^2([0,1],\overline{M_+(n,m)})$ with the $L^2$-distance, as follows by the proof of \cite[Theorem 1]{cavallucci2022}, so the completion of $\Omega^1_+(M,\Rn)$ depends only on $m$ and $n$.
\end{proof}

\subsection{An alternative description of the completion}
We now give an alternative description of the metric completion for the space of full-ranked one-forms $\Omega^1_+(M,\Rn)$ solving Conjecture 4.10 of \cite{bauer2021oneforms}, which states that the metric completion of $\Omega^1_+(M,\Rn)$ is given as the space of finite volume sections of the vector bundle $\Rn\otimes T^*M$ modulo the equivalence relation $\alpha\sim \beta$ if and only if
\begin{align}
	\alpha(x)\neq \beta(x) \Longrightarrow \operatorname{rank}(\alpha(x)) < m \text{ and } \operatorname{rank}(\beta(x)) < m
\end{align}
holds $\mu_{g_0}$-almost everywhere. Here we say that a section $\alpha: M\to \Rn\otimes T^*M$ has finite volume if $\alpha$ is measurable and $$\int_M\sqrt{\det(g_0(x)^{-1}(\alpha^T\alpha)(x))}d\mu_{g_0}(x)<\infty.$$
Denote by $\hat{\Omega}^1_+(M,\Rn)$ the above formulation of the metric completion of the space of full-ranked one-forms. 
It is proved in \cite[Theorem 3.13]{bauer2021oneforms} that the metric completion $\overline{M_+(n,m)}$ can be identified with $M_+(n,m) \cup \lbrace 0 \rbrace$ where $\distnm(A,0)$ is the infimum of the $g_{n\times m}$-lengths of all curves $c\colon [0,1]\to M(n,m)$ starting at $A$ such that $c\vert_{[0,1)}\subseteq M_+(n,m)$ and $c(1)\in \text{Sing}(n,m)$. 
The following proposition tells us that the metric completion formulation $\hat{\Omega}^1_+(M,\Rn)$ of the space of full-ranked one-forms in \cite[Conjecture~4.10]{bauer2021oneforms} can be identified with the $L^2$ space of mappings from $M$ to $\overline{M_+(n,m)}$ we give in Theorem~\ref{theo-completion}.


\begin{prop}
	\label{prop-bijection}
	There is an explicit bijection between the space $\hat{\Omega}^1_+(M,\Rn)$ and the space $L^2(M,\overline{M_+(n,m)})$.
\end{prop}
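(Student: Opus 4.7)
The plan is to construct an explicit pair of mutually inverse maps $\Phi$ and $\Psi$ via a fixed choice of standard trivializations, exploiting the explicit description $\overline{M_+(n,m)} = M_+(n,m) \cup \{0\}$ recalled above, and then to verify that the finite-volume condition on the left matches the $L^2$-summability condition on the right.

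First, I would fix a finite cover $\{U_i\}_{i=1}^N$ of $M$ by domains of standard trivializations $\varphi_i$ and set $M_i := U_i \setminus \bigcup_{j<i} U_j$, obtaining a Borel partition of $M$; write $i(x)$ for the unique index with $x \in M_{i(x)}$. For $\alpha \in \hat{\Omega}^1_+(M,\Rn)$ I define
\begin{equation*}
	\Phi(\alpha)(x) := \begin{cases} \varphi_{i(x)}|_x(\alpha(x)) & \text{if } \operatorname{rank}\alpha(x) = m, \\ 0 & \text{otherwise,}\end{cases}
\end{equation*}
and, in the reverse direction, $\Psi(f)(x) := \varphi_{i(x)}|_x^{-1}(f(x))$ when $f(x) \neq 0$, setting $\Psi(f)(x) := 0 \in \Rn\otimes T^*_x M$ otherwise. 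The equivalence relation defining $\hat{\Omega}^1_+(M,\Rn)$ allows two representatives to disagree only on the locus where both are non-full-ranked, but there $\Phi$ sends both to the ideal point $0$; dually $\Psi$ respects $\mu_{g_0}$-a.e.\ equivalence, so both maps descend to equivalence classes. Measurability is inherited from the inputs through smoothness of the $\varphi_i$, and at each point the maps $\varphi_{i(x)}|_x$ and $\varphi_{i(x)}|_x^{-1}$ are honest inverses on the open locus and exchange the zero tensor with the ideal point on the singular locus, so $\Phi$ and $\Psi$ are pointwise mutually inverse.

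The main step, and the one I expect to be the principal technical obstacle, is matching the two integrability conditions. In a standard trivialization $d\mu_{g_0}$ pushes forward to Lebesgue measure, and writing $A(x) := \varphi_{i(x)}|_x(\alpha(x))$ one gets $\sqrt{\det\bigl(g_0(x)^{-1}(\alpha^T\alpha)(x)\bigr)} = \sqrt{\det(A(x)^T A(x))}$. From the explicit radial-type minimizing paths used to describe $\overline{M_+(n,m)}$ in \cite[Theorem 3.13]{bauer2021oneforms}, one extracts a bi-Lipschitz comparison $\distnm(A,0)^2 \asymp \sqrt{\det(A^TA)}$ with constants depending only on $n$ and $m$. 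Combining these yields the equivalence
\begin{equation*}
	\int_M \sqrt{\det\bigl(g_0^{-1}\alpha^T\alpha\bigr)}\,d\mu_{g_0} < \infty \iff \int_M \distnm(\Phi(\alpha)(x),0)^2\,d\mu_{g_0}(x) < \infty,
\end{equation*}
so $\Phi$ takes values in $L^2(M,\overline{M_+(n,m)})$ and $\Psi$ in $\hat{\Omega}^1_+(M,\Rn)$, completing the bijection.
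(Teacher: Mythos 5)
Your construction is essentially the paper's: both fix a measurable decomposition of $M$ into trivializing patches (a triangulation in the paper, your Borel refinement of a finite cover), push a section fiberwise through the trivializations, and match the two integrability conditions by comparing $\distnm(A,0)^2$ with $\sqrt{\det(A^TA)}$. The paper proves the needed lower bound $\distnm(A,0)\geq\tfrac{2}{\sqrt m}\sqrt[4]{\det(A^TA)}$ in Lemma~\ref{lem.matrix.bounds} and cites \cite[Lemma 3.12]{bauer2021oneforms} for the converse; your bi-Lipschitz claim is correct, and in fact a direct length computation along the radial curve $t\mapsto tA$, combined with that lower bound, shows it is the exact identity $\distnm(A,0)^2=\tfrac{4}{m}\sqrt{\det(A^TA)}$.
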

\begin{proof}
	We first define a map from the space $\hat{\Omega}^1_+(M,\Rn)$ to $L^2(M,\overline{M_+(n,m)})$. Fix a triangulation of $M$ and call $V_i$ the interior of the simplices: they are disjoint and $\mu_{g_0}(M\setminus \bigcup_i V_i) = 0$. We can also suppose that each $V_i$ is the support of a standard trivialization $\varphi_i$. We define a map
	\begin{align}
		F: \hat{\Omega}^1_+(M,\Rn) &\to L^2(M,\overline{M_+(n,m)})\\
		[\alpha]&\mapsto [\overline{\alpha}]
	\end{align}
	by
	\begin{align}
		\overline{\alpha}(x) = \begin{cases}
			(\pi_2\circ\varphi_i\circ\alpha)(x) &\quad x\in V_i \text{ and it is full ranked,}\\
			0 &\quad \text{ otherwise.}
		\end{cases}
	\end{align}
	The map $\overline{\alpha}$ is defined $\mu_{g_0}$ almost everywhere, which is enough to define a measurable map from $M$ to $\overline{M_+(n,m)}$. It is also a $L^2$-map because of the finite volume assumption on $\alpha$, as it follows by \cite[Lemma 3.12]{bauer2021oneforms}. If $[\alpha] = [\beta] \in \hat{\Omega}^1_+(M,\Rn)$ then $\overline{\alpha}(x)=\overline{\beta}(x)$ for $\mu_{g_0}$-a.e.$(x)$ by definition, so the map $F$ is well-defined.
	
	If $d_{L^2}([\overline{\alpha}], [\overline{\beta}]) = 0$ then by definition we have 
	\begin{align}
		\distnm(\overline{\alpha}(x), \overline{\beta}(x)) = 0
	\end{align}
	for $\mu_{g_0}$-a.e. $x\in M$. Therefore for $\mu_{g_0}$-a.e. $x\in M$ it holds that either $\alpha(x) = \beta(x)$ or they are both not full ranked, i.e. $[\alpha] = [\beta]$ in $\hat{\Omega}^1_+(M,\Rn)$, so $F$ is injective.
	
	Now let $[\overline{\alpha}]\in L^2(M,\overline{M_+(n,m)})$. Then locally on each $V_i$, $\overline{\alpha}$ can be seen as a $L^2$-section of the trivial bundle $V_i\times \overline{M_+(n,m)}$, and the following gives a preimage $[\alpha]$ in $\hat{\Omega}^1_+(M,\Rn)$
	\begin{align}
		\alpha(x) = \begin{cases}
			(\varphi_i^{-1}\circ \overline{\alpha}\vert_{V_i})(x) &\quad \text{ for }  x\in V_i \text{ and } \alpha(x)\neq 0,\\
			0 \in (\Rn\otimes T^*M ) &\quad \text{ otherwise}.
		\end{cases}
	\end{align}
	So defined $\alpha$ is a measurable section. Using the next Lemma \ref{lem.matrix.bounds} and $[\overline{\alpha}]$ is $L^2$, we have
	\begin{align}
		\int_M\sqrt{\det(g_0)(x)^{-1}(\alpha^T\alpha))(x)}d\mu_{g_0}(x) &= \int_M\sqrt{\det(\overline{\alpha}^T\overline{\alpha})(x)}dx\\
		&\leq \frac{m}{4}\int_M\distnm\left(\overline{\alpha}(x),0\right)dx \leq +\infty,
	\end{align}
	which shows that $\alpha$ has finite volume and the statement is proved.
\end{proof}

\begin{lem}\label{lem.matrix.bounds}
	Let $A, B\in M_+(n,m)$. Then
	\begin{align}
		\distnm(A, B) &\geq \frac{2}{\sqrt{m}}\left\vert\sqrt[4]{\det(A^TA)}-\sqrt[4]{\det(B^TB)}\right\vert .
	\end{align}
\end{lem}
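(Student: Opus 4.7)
The plan is to bound the change of the scalar function $\phi(A) := \det(A^TA)^{1/4}$ along any piecewise $C^1$ curve by (a constant multiple of) the $g_{n\times m}$-length of the curve, and then take the infimum.

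First I would fix a $C^1$ curve $c\colon [0,1] \to M_+(n,m)$ with $c(0)=A$, $c(1)=B$, and write $U(t) := \dot c(t)$ and $g(t) := c(t)^T c(t)$. A direct computation using Jacobi's formula gives
\begin{align}
\frac{d}{dt}\phi(c(t)) = \tfrac{1}{4}\phi(c(t))\,\mathrm{tr}\bigl(g(t)^{-1}\dot g(t)\bigr) = \tfrac{1}{2}\phi(c(t))\,\mathrm{tr}\bigl(c(t)(c(t)^Tc(t))^{-1}U(t)^T\bigr),
\end{align}
where the second equality uses $\dot g = U^T c + c^T U$ and the cyclic property of the trace (the two terms coincide because $(A^TA)^{-1}$ is symmetric).

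Next I would apply the Cauchy–Schwarz inequality for the positive-definite bilinear form $(X,Y)\mapsto \mathrm{tr}(X(A^TA)^{-1}Y^T)$ on $n\times m$ matrices, taking $X=c(t)$ and $Y=U(t)$. The key observation is that
\begin{align}
\mathrm{tr}\bigl(c(c^Tc)^{-1}c^T\bigr) = \mathrm{tr}\bigl((c^Tc)^{-1}c^Tc\bigr) = \mathrm{tr}(I_m)=m,
\end{align}
so Cauchy–Schwarz yields $|\mathrm{tr}(c(c^Tc)^{-1}U^T)| \le \sqrt{m}\cdot\sqrt{\mathrm{tr}(U(c^Tc)^{-1}U^T)}$. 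Comparing with the definition of $g_{n\times m}$ we have $\|U\|_{c}^2 = \mathrm{tr}(U(c^Tc)^{-1}U^T)\,\phi(c)^2$, so the right-hand side equals $\sqrt{m}\,\|U\|_c/\phi(c)$.

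Combining these two estimates gives the pointwise bound $|\frac{d}{dt}\phi(c(t))| \le \tfrac{\sqrt{m}}{2}\|U(t)\|_{c(t)}$. Integrating over $[0,1]$ and using the fundamental theorem of calculus yields
\begin{align}
\bigl|\phi(A) - \phi(B)\bigr| \le \tfrac{\sqrt{m}}{2}\,L_{g_{n\times m}}(c),
\end{align}
and taking the infimum over all piecewise $C^1$ curves from $A$ to $B$ gives the claimed inequality. I do not expect any real obstacle here; the only thing to be careful about is the cyclic-trace manipulation in the computation of $\dot{\phi}$ and the correct bookkeeping of the $\sqrt{\det}$ factor that distinguishes $\|U\|_A$ from the unweighted Frobenius-type norm $\mathrm{tr}(U(A^TA)^{-1}U^T)^{1/2}$.
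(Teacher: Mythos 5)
Your proof is correct and follows essentially the same route as the paper: compute $\frac{d}{dt}\det(c^Tc)^{1/4}$ along a curve, bound it pointwise by $\frac{\sqrt m}{2}\,\|\dot c\|_{g_{n\times m}}$, integrate, and take the infimum over curves. The only cosmetic difference is the derivation of the key pointwise estimate: you invoke Cauchy–Schwarz for the form $(X,Y)\mapsto\tr(X(c^Tc)^{-1}Y^T)$ together with $\tr(c(c^Tc)^{-1}c^T)=m$, whereas the paper obtains the same inequality $\tr(U(c^Tc)^{-1}U^T)\ge\frac{1}{m}\tr(Uc^+)^2$ from the orthogonal decomposition of $Uc^+$ into its traceless and pure-trace parts; these two arguments are equivalent.
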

\begin{proof}
	For any arbitrary $X\in M_+(n,m)$ and $W\in T_XM_+(n,m)$, the following inequality always holds
	\begin{equation}
		\begin{aligned}
			\label{eq-trace}
			\tr(W(X^TX)^{-1}W^T) &= \tr(\boldsymbol{WW}^T) = \tr(\boldsymbol{W_0W_0}^T) + \frac{(\tr(\boldsymbol{W}))^2}{m}\\
			&\geq \frac{(\tr(\boldsymbol{W}))^2}{m} = \frac{(\tr(WX^+))^2}{m},
		\end{aligned}
	\end{equation}
	where $\boldsymbol{W} = WX^+$ and $\boldsymbol{W_0} = \boldsymbol{W} - \frac{\tr(\boldsymbol{W})}{m}XX^+$ is the traceless part of $\boldsymbol{W}$. 
	Let $\varphi:[0,1]\to M_+(n,m)$ be a piecewise $C^1$-path connecting $A$ and $B$. Then we have
	\begin{align}
		\frac{d}{dt}\left(\sqrt[4]{\det(\varphi^T\varphi)}\right) &= \frac14\left(\det(\varphi^T\varphi)\right)^{-3/4}\tr\left(\frac{d}{dt}(\varphi^T\varphi) \cdot (\varphi^T\varphi)^{-1}\right)\det(\varphi^T\varphi)\\
		&=\frac14\tr\left(\frac{d}{dt}(\varphi^T\varphi)\cdot(\varphi^T\varphi)^{-1}\right)\sqrt[4]{\det(\varphi^T\varphi)}\\
		&=\frac12\tr(\varphi_t\varphi^+)\sqrt[4]{\det(\varphi^T\varphi)}\\
		&=\frac12\left(\left(\tr(\varphi_t\varphi^+)\right)^2\sqrt{\det(\varphi^T\varphi)}\right)^{1/2}\\
		&\leq\frac{\sqrt{m}}{2}\left(\tr(\varphi_t(\varphi^T\varphi)^{-1}\varphi_t^T)\sqrt{\det(\varphi^T\varphi)}\right)^{1/2},
	\end{align}
	where the last inequality follows by \eqref{eq-trace}. Therefore
	\begin{align}
		\sqrt[4]{\det(A^TA)}-\sqrt[4]{\det(B^TB)} &= \int_0^1\frac{d}{dt}\left(\sqrt[4]{\det(\varphi^T\varphi)}\right)dt\\
		&\leq\frac{\sqrt{m}}{2}\int_0^1\left(\tr(\varphi_t(\varphi^T\varphi)^{-1}\varphi_t^T)\sqrt{\det(\varphi^T\varphi)}\right)^{1/2}dt\\
		&=\frac{\sqrt{m}}{2}L_{g_{n\times m}}(\varphi).
	\end{align}
	The result follows immediately by exchanging $A$ and $B$ and taking the infimum of the length over $\varphi$. 
\end{proof}

\section{Connection to the space of metrics}
\label{sec.EbinMetric}

Let $\Gamma(S^2T^*M)$ denote the space of all smooth symmetric $(0,2)$ tensor fields on $M$. The space of all Riemannian metrics on $M$, denoted by $\Met(M)$, is the space of all smooth positive definite symmetric $(0,2)$ tensor fields on $M$, and thus an open subset of the space $\Gamma(S^2T^*M)$. Its tangent space at each element $g$ is the space $\Gamma(S^2T^*M)$ itself. Let $g\in\Met(M)$ and $h,k\in T_g\Met(M)\cong\Gamma(S^2T^*M)$. We consider the following metric defined as a multiple of the Ebin metric \cite{ebin1970manifold} on $\Met(M)$:
\begin{align}\label{eq.metric.RieMetrics}
	( h, k)_g = \frac14\int_M\tr\left(g^{-1}hg^{-1}k\right)d\mu_g,
\end{align}
where $d\mu_g$ is the volume form induced by $g$.

In local coordinates with respect to the basis $\{dx^i, i=1,2,\cdots, m\}$ of $T^*_xM$, each symmetric $(0,2)$ tensor field on $M$ restricted to $x\in M$ can be represented by a $m\times m$ symmetric matrix, and is positive definite if the tensor field is a Riemannian metric. Denote by $\Sym(m)$ the space of symmetric $m\times m$ matrices and by $\Sym_+(m)$ the open subspace of positive definite symmetric $m\times m$ matrices. The space $\Sym_+(m)$ is a $\frac{m(m+1)}{2}$
dimensional manifold with tangent space at each point given by $\Sym(m)$. 
Let $g\in \Sym_+(m)$ and $h, k\in T_g\Sym_+(m) \cong \Sym(m)$. The Riemannian metric on $\Sym_+(m)$ corresponding to \eqref{eq.metric.RieMetrics} is:
\begin{align}\label{eq.metric.sym}
	\langle h, k\rangle^{\Sym}_g = \frac14\left(g^{-1}hg^{-1}k\right)\sqrt{\det(g)}.
\end{align}
We denote by $\distMet$ the distance function on $\Met(M)$ with respect to the Riemannian metric \eqref{eq.metric.RieMetrics}, and by $\distSym$ the distance function on $\Sym_+(m)$ with respect to the Riemannian metric \eqref{eq.metric.sym}. By \cite{cavallucci2022} we know that the metric completion of $(\Met(M), \distMet)$ is isometric to $(L^2(M,\overline{\Sym_+(m)}), d_{L^2})$, where $\overline{\Sym_+(m)}$ is the metric completion of the space of positive definite symmetric matrices $(\Sym_+(m), \distSym)$.

Note that each $\alpha\in\Omega^1_+(M,\Rn)$ induces a Riemannian metric $\alpha^T\alpha$ on $M$. We can then define a mapping from the space of full-ranked one-forms to the space of Riemannian metrics on $M$ as follows
\begin{align}\label{eq.proj.oneForm}
	\tilde{\pi}: \Omega^1_+(M,\Rn)\to\Met(M),\quad \alpha\mapsto\alpha^T\alpha.
\end{align}
Restrict to one point $x\in M$ we have the corresponding mapping from the space of full rank matrices to the space of positive definite symmetric matrices
\begin{align}\label{eq.proj.matrix}
	\pi \colon M_+(n,m) \to \Sym_+(m), \quad A \mapsto A^TA.
\end{align}
These two mappings \eqref{eq.proj.oneForm} and \eqref{eq.proj.matrix} have proven to be Riemannian submersions \cite[Theorem~4.2, 3.3]{bauer2021oneforms}, in particular $1$-Lipschitz.

\subsection{The quotient structure of \texorpdfstring{$\Met(M)$}{Lg}}
For the finite dimensional case the fibers of the submersion \eqref{eq.proj.matrix} has been characterized as the orbits under the action of the orthogonal group $\on{O}(n)$ \cite[Proposition~3.2]{bauer2021oneforms}. Note that the infinite dimensional group of smooth mappings $C^{\infty}(M, \SO(n))$ acts naturally on the space of full-ranked one-forms $\Omega^1_+(M,\Rn)$ by pointwise left multiplication. We now show the following proposition for the infinite dimensional case, which tells us that the orbits under the action of $C^{\infty}(M, \SO(n))$ are precisely the fibers of the mapping \eqref{eq.proj.oneForm} and thus gives a quotient structure of the space of Riemannian metrics $\Met(M)$. 
\begin{prop}\label{prop.pointwiseOrbits}
	Let $\alpha, \beta\in\Omega_+^1(M, \Rn)$. Then $\tilde{\pi}(\alpha) = \tilde{\pi}(\beta)$ if and only if there exists a function $O\in C^{\infty}(M,\on{SO}(n))$ such that $\alpha =O\beta$.
\end{prop}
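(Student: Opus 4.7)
My plan is to prove the two implications separately; the hard one is that $\tilde\pi(\alpha)=\tilde\pi(\beta)$ implies the existence of $O$.

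For the easy direction I will simply compute: if $\alpha(x)=O(x)\beta(x)$ with $O(x)\in\SO(n)$, then $\alpha(x)^T\alpha(x)=\beta(x)^T O(x)^T O(x)\beta(x)=\beta(x)^T\beta(x)$, so $\tilde\pi(\alpha)=\tilde\pi(\beta)$.

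For the converse, assume $\alpha^T\alpha=\beta^T\beta$. My plan is to use the Moore--Penrose inverse of Remark~\ref{rem.penroseinv} to produce a smooth pointwise candidate and then complete it orthogonally. The smooth section $\alpha\beta^+$ of $\on{End}(\RR^n)$ satisfies $(\alpha\beta^+)\beta=\alpha$; thanks to the hypothesis $\alpha^T\alpha=\beta^T\beta$, it acts as the unique linear isometry from $\im(\beta(x))$ onto $\im(\alpha(x))$ at each $x$ (sending $\beta(x)u$ to $\alpha(x)u$), and it vanishes on $\im(\beta(x))^\perp$. I will then construct a smooth orthogonal bundle map $R\colon\im(\beta)^\perp\to\im(\alpha)^\perp$, extended by zero on $\im(\beta)$; a direct pointwise check using $\alpha^T\alpha=\beta^T\beta$ and the isometry property of $R$ yields that $O:=\alpha\beta^+ + R$ lies in $\mathrm{O}(n)$ pointwise and satisfies $O\beta=\alpha$. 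Finally, I will correct the sign: $\det O$ is a locally constant function valued in $\{\pm 1\}$, and on any connected component of $M$ where it equals $-1$ I compose $R$ with a smooth reflection of $\im(\alpha)^\perp$; such a reflection is available because $n-m\geq 1$ and the orientability of $M$ combined with that of the trivial bundle $M\times\RR^n$ makes $\im(\alpha)^\perp$ orientable as well.

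The main obstacle will be the smooth construction of $R$. Pointwise $R(x)$ is determined only up to the $\SO(n-m)$ stabilizer of $\beta(x)$, so no canonical pointwise choice exists, and a naive partition-of-unity patching fails since convex combinations of orthogonal maps are not orthogonal. My planned resolution is bundle-theoretic: $\im(\beta)^\perp$ and $\im(\alpha)^\perp$ are smooth rank-$(n-m)$ Riemannian subbundles of the trivial bundle $M\times\RR^n$, each a complement of a rank-$m$ subbundle identified with $TM$ via $\beta$ or $\alpha$. The smooth endomorphism $R_0:=(I-\alpha\alpha^+)(I-\beta\beta^+)$ sends $\im(\beta)^\perp$ into $\im(\alpha)^\perp$ and vanishes on $\im(\beta)$; on the open set where $R_0|_{\im(\beta)^\perp}$ is invertible I will polar-decompose it to obtain a smooth orthogonal $R$ there, and on the complementary set I will glue with local orthonormal trivializations of the two complement bundles, using that the transition data takes values in $\SO(n-m)$ and can be smoothed using the exponential map of this Lie group to produce a global smooth choice. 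This is the technical heart of the argument; once $R$ is in place, the remaining checks reduce to routine pointwise computations with $\alpha\beta^+$ and projections.
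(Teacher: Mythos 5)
Your proposal follows essentially the same route as the paper (which works with $\beta\alpha^+$ in place of your $\alpha\beta^+$, a cosmetic swap of roles), and your pointwise check that $O=\alpha\beta^+ + R$ lies in $\mathrm{O}(n)$ and satisfies $O\beta=\alpha$ whenever $R$ is an isometry $\on{Im}(\beta)^\perp\to\on{Im}(\alpha)^\perp$ extended by zero is fine. The genuine gap is exactly where you put the technical weight, and neither of your two tools closes it. The candidate $R_0=(I-\alpha\alpha^+)(I-\beta\beta^+)\vert_{\on{Im}(\beta)^\perp}$ is the orthogonal projection of $\on{Im}(\beta)^\perp$ onto $\on{Im}(\alpha)^\perp$; its kernel is $\on{Im}(\alpha)\cap\on{Im}(\beta)^\perp$, which can be nonzero on a nonempty set of $x$ (for instance whenever $\alpha(x)$ and $\beta(x)$ have orthogonal images, which is possible once $n\geq 2m$), so the polar factor need not be globally available. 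The fallback of gluing local orthonormal trivializations and smoothing the transition data with the exponential map of $\SO(n-m)$ is not a proof: producing a global smooth orthogonal bundle isomorphism $\on{Im}(\beta)^\perp\to\on{Im}(\alpha)^\perp$ is equivalent to these two rank-$(n-m)$ oriented Euclidean bundles being isomorphic, which is a global topological condition, and a local smoothing of transition functions cannot create a section of a possibly-nontrivial principal $\SO(n-m)$-bundle, for essentially the same reason you correctly note a partition of unity fails.

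For comparison, the paper's own proof glosses over the same point: it asserts that $\on{Im}(\alpha\alpha^+)$ is the trivial bundle $M\times\RR^m$ and then takes a global smooth orthonormal frame of it, extended to a global frame of $M\times\RR^n$. But $\on{Im}(\alpha\alpha^+)\cong TM$ (via $v\mapsto\alpha(v)$), which is not trivial in general (e.g., $M=S^2$, $m=2$), so the existence of such frames is not automatic. A complete argument along either route would have to actually establish that $\on{Im}(\alpha)^\perp$ and $\on{Im}(\beta)^\perp$ are isomorphic as oriented Euclidean bundles, given $\alpha^T\alpha=\beta^T\beta$ (equivalently, that the $\SO(n-m)$-bundle of admissible orthogonal extensions of $\alpha\beta^+$ has a global section). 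You have correctly located where the difficulty sits, but you have not supplied that argument, and neither, as written, does the paper.
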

\begin{proof}
	It is straightforward to see that $\alpha = O\beta$ with $O\in C^{\infty}(M,\SO(n))$ implies that $\alpha^T\alpha = \beta^T\beta$. Note that the Moore-Penrose inverse $\alpha^+(x)$ is the adjoint operator of $\alpha(x)$ from $\RR^n$ to $T_xM$, see Remark~\ref{rem.penroseinv}. For the opposite direction, we will show that there is a function $O\in C^{\infty}(M, \SO(n))$ such that $\alpha\alpha^+ = O\beta\alpha^+$, then the final result follows directly from $\alpha^+\alpha = \on{Id}$, where $\on{Id}: M\times\RR^n\to M\times\RR^n$ denotes the identity bundle map over $M$. 
	
	Both $\alpha\alpha^+$ and $\beta\alpha^+$, for each $x\in M$, are linear transformations from $\RR^n$ to itself of rank $m$. In other words, $\alpha\alpha^+, \beta\alpha^+: M\times \RR^n\to M\times \RR^n$ are both bundle homomorphisms of constant rank $m$ from the trivial vector bundle $M\times \RR^n$ to itself. Therefore, the image of the bundle map $\alpha\alpha^+$, denoted by $\on{Im}(\alpha\alpha^+)$, is a smooth rank-$m$ subbundle $M\times\RR^m$ \cite[Theorem 10.34]{lee2003introduction}. Note that $\alpha^+\alpha$ is the identity bundle map from the tangent bundle $TM$ to itself and it has eigenvalue $1$ of multiplicity $m$, hence the linear bundle map $\alpha\alpha^+$ has eigenvalue $1$ of multiplicity $m$ and eigenvalue $0$ of multiplicity $n-m$. Note that $
	(\on{Id} - \alpha\alpha^+)(\alpha\alpha^+) = \alpha\alpha^+ - \alpha\alpha^+ = 0.
	$
	Thus, the image $\on{Im}(\alpha\alpha^+)=M\times\RR^m$ is just the $m$-dimensional eigenspace of the linear operator $\alpha\alpha^+$ associated with eigenvalue $1$. 
	
	We now choose a smooth orthonormal basis $\{u_1,\cdots, u_m\}$ of sections of the trivial bundle $\on{Im}(\alpha\alpha^+)=M\times\RR^m$ and extend it to 
	a smooth global frame $\{u_1,\cdots,u_m, u_{m+1},\cdots,u_n\}$ for $M\times\RR^n$. We obtain
	\begin{align}
		(\alpha\alpha^+)(u_1,\cdots, u_n) = (u_1,\cdots,u_n)\Sigma,\quad \Sigma = \begin{pmatrix}
			I_m & 0\\
			0 & 0
		\end{pmatrix}
	\end{align}
	Define $\{v_i, i=1,\cdots,m \}$ such that $v_i = (\beta\alpha^+)(u_i)$ for $i = 1,\cdots,m$. Since $\alpha^T\alpha = \beta^T\beta$, we have  $\left(\beta\alpha^+\right)^T = \alpha\beta^+ $. Then it follows from
	\begin{align}
		\langle v_i, v_j\rangle_{\RR^n} &= \left\langle (\beta\alpha^+)(u_i), (\beta\alpha^+)(u_i)\right\rangle_{\RR^n}\\
		&= \left\langle u_i, (\alpha\alpha^+)(u_j)\right\rangle_{\RR^n}\\
		&= \langle u_i, u_j\rangle_{\RR^n},
	\end{align}
	that $\{v_1,\cdots,v_m\}$ is a set of smooth orthonormal sections of $M\times\RR^n$. Extend $\{v_i\}$ to a smooth global frame $\{v_1,\cdots,v_n\}$ for $M\times\RR^n$. Immediately we have for each $x\in M$ that
	\begin{align}
		(\beta\alpha^+)(u_1,\cdots, u_n) = (v_1,\cdots,v_n)\Sigma.
	\end{align}
	Let $U$ and $V$, for each $x\in M$, be the orthonormal matrices with columns formed by $\{u_1,\cdots,u_n\}\vert_x$ and by $\{v_1,\cdots,v_n\}\vert_x$, respectively. Note that $m< n$. We can always change the sign of the last columns $u_n$ and $v_n$ such that $U(x)$ and $V(x)$ are special orthogonal matrices. Since the columns are all global smooth sections of $M\times\Rn$, we have $U, V\in C^{\infty}(M, \SO(n))$. Let $O = UV^{-1}$. The result follows immediately.
\end{proof}
Note that the group action of $C^\infty(M,\SO(n))$ on $(\Omega_+^1(M,\RR^n), \distO)$ is by isometries \cite[Lemma 4.1]{bauer2021oneforms}. Proposition \ref{prop.pointwiseOrbits} directly implies that the metric space $(\Met(M), \distMet)$ is isometric to the quotient of $(\Omega_+^1(M,\RR^n), \distO)$ by the group of isometries $C^\infty(M,\SO(n))$.

\subsection{The quotient structure of the metric completion of \texorpdfstring{$\Met(M)$}{Lg}} The projection $\tilde{\pi}$ \eqref{eq.proj.oneForm} can be extended to the metric completions of the space of full-ranked one-forms $\Omega^1_+(M,\Rn)$ and the space of Riemannian metrics $\Met(M)$ with the same formula:
\begin{align}\label{eq.proj.metricCompletion}
	\tilde{\pi}:L^2(M,\overline{M_+(n,m)}) \to L^2(M,\overline{\Sym_+(m)}),\qquad \alpha\mapsto\alpha^T\alpha.
\end{align}
We want to describe naturally the space $L^2(M,\overline{\Sym_+(m)})$ also as a quotient of the space $L^2(M,\overline{M_+(n,m)})$ by a group of isometries. Let $\text{Mes}(M, \SO(n))$ denote the space of all measurable mappings from $M$ to the special orthogonal group $\SO(n)$. The operation $(O\cdot O') (x) := O(x)O'(x)$ induces on $\text{Mes}(M, \SO(n))$ a group structure which is continuous with respect to any topology which implies pointwise convergence. The group $\text{Mes}(M, \SO(n))$ acts in a natural way by isometries on the metric space $L^2(M,\overline{M_+(n,m)})$ as $(O \cdot \alpha)(x) = O(x)\alpha(x)$.

We show the following proposition analogous to Proposition~\ref{prop.pointwiseOrbits}, and it immediately leads to a quotient characterization of the metric completion of the space of Riemannian metrics $\Met(M)$. 
\begin{prop}
	Let $\tilde{\pi}$ be as in \eqref{eq.proj.metricCompletion}. Then $\tilde{\pi}(\alpha) = \tilde{\pi}(\beta)$ if and only if $\alpha = O\beta$ for some $O\in \textup{Mes}(M,\SO(n))$. In other words $L^2(M,\overline{\Sym_+(m)})$ is isometric to the quotient of $L^2(M,\overline{M_+(n,m)})$ by the group of isometries $\textup{Mes}(M,\SO(n))$. The map $\tilde{\pi}$ is a metric submetry.
\end{prop}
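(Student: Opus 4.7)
The plan is to dispose of the easy direction by direct computation and reduce the harder direction to a measurable selection argument modelled on Proposition \ref{prop.pointwiseOrbits}. If $\alpha = O\beta$ with $O \in \textup{Mes}(M,\SO(n))$, then pointwise a.e.\ $\alpha(x)^T\alpha(x) = \beta(x)^T O(x)^T O(x) \beta(x) = \beta(x)^T\beta(x)$, whence $\tilde\pi(\alpha) = \tilde\pi(\beta)$. Conversely, assuming $\tilde\pi(\alpha) = \tilde\pi(\beta)$, I would split $M$ into the measurable sets $M_0 := \{x : \alpha(x) = 0 \text{ in } \overline{M_+(n,m)}\}$ and $M_+ := M \setminus M_0$. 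On $M_0$ the hypothesis forces $\beta(x) = 0$ a.e., so any constant choice $O(x) = I$ suffices; on $M_+$ both $\alpha(x)$ and $\beta(x)$ lie in $M_+(n,m)$ and satisfy $\alpha(x)^T\alpha(x) = \beta(x)^T\beta(x)$, and the remaining task is to produce a measurable $O : M_+ \to \SO(n)$ with $\alpha = O\beta$ a.e.

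To produce this $O$ I would follow the scheme of Proposition \ref{prop.pointwiseOrbits}, but with measurable rather than smooth frames. The polar factor $A \mapsto A(A^T A)^{-1/2}$ is smooth on $M_+(n,m)$ and takes values in the Stiefel manifold $V_m(\Rn)$, so $Q_\alpha := \alpha(\alpha^T\alpha)^{-1/2}$ and $Q_\beta := \beta(\beta^T\beta)^{-1/2}$ are measurable Stiefel-valued maps on $M_+$. Since $\alpha^T\alpha = \beta^T\beta$, the relation $\alpha = O\beta$ becomes $Q_\alpha = O Q_\beta$, so it suffices to extend $Q_\alpha$ and $Q_\beta$ to measurable maps $\tilde Q_\alpha, \tilde Q_\beta : M_+ \to \SO(n)$ and set $O := \tilde Q_\alpha \tilde Q_\beta^{-1}$. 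A concrete extension comes from covering $V_m(\Rn)$ by the finitely many Borel sets on which a fixed choice of $n-m$ standard basis vectors completes the given orthonormal $m$-frame to a basis of $\Rn$, applying Gram-Schmidt, and flipping the sign of the last vector to enforce positive determinant. More abstractly, one can apply the Kuratowski-Ryll-Nardzewski selection theorem to the closed-valued multifunction $x \mapsto \{O \in \SO(n) : O\beta(x) = \alpha(x)\}$, whose values are nonempty $\SO(n-m)$-cosets by Proposition \ref{prop.pointwiseOrbits}.

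For the isometry and submetry assertions, $\textup{Mes}(M,\SO(n))$ acts on $L^2(M,\overline{M_+(n,m)})$ by $d_{L^2}$-isometries, so the quotient distance is $\inf_{O} d_{L^2}(\alpha, O\beta)$. Pointwise, $\pi : M_+(n,m) \to \Sym_+(m)$ is a Riemannian submersion with compact structure group $\SO(n)$, which gives the horizontal distance formula
\[
	\distSym(\pi(A),\pi(B)) = \inf_{O \in \SO(n)} \distnm(A, OB),
\]
and this identity extends continuously to the completions. Integrating it and exchanging infimum and integral by measurably selecting a minimizer in the compact set $\SO(n)$ at each $x$ yields
\[
	d_{L^2}(\tilde\pi(\alpha), \tilde\pi(\beta))^2 = \inf_{O \in \textup{Mes}(M,\SO(n))} d_{L^2}(\alpha, O\beta)^2,
\]
which simultaneously identifies the quotient with $L^2(M,\overline{\Sym_+(m)})$ as a metric space and shows that $\tilde\pi$ is a submetry. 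The main obstacle throughout will be the measurable selection: the Stiefel bundle $\SO(n) \to V_m(\Rn)$ has no global smooth section, so one must piece together the extension on a Borel partition and carefully handle the singular point of the completions; once that is done, the remaining steps are largely bookkeeping.
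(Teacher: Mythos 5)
Your orbit characterization is essentially the paper's argument (a measurable Gram--Schmidt completion of the polar factors to $\SO(n)$-valued frames), except that the paper routes both $\alpha$ and $\beta$ through the canonical representative $\tilde p$ built from the square root $(\alpha^T\alpha)^{1/2}$, so that only one frame extension is needed; also, the $\SO(n-m)$-coset structure of the pointwise fibers that feeds your Kuratowski--Ryll-Nardzewski argument comes from \cite[Prop.~3.2]{bauer2021oneforms}, not from Proposition~\ref{prop.pointwiseOrbits}, which concerns smooth sections. Where you genuinely diverge is the submetry assertion. The paper verifies that the $\textup{Mes}(M,\SO(n))$-orbits are $d_{L^2}$-closed and then invokes \cite{KL22}, which gives that a quotient by a group of isometries with closed orbits is a submetry. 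You instead prove the quotient-metric identity directly, by integrating the pointwise horizontal-distance formula $\distSym(\pi(A),\pi(B))=\inf_{O\in\SO(n)}\distnm(A,OB)$ and exchanging $\inf$ with $\int$ via a measurable selection of minimizers; this is more self-contained and yields the explicit quotient distance rather than only the abstract submetry property. The cost is that two steps you assert really do need a short argument in this incomplete setting: that the horizontal-distance formula holds on $M_+(n,m)$ (horizontal lifts of curves exist here because the $\SO(n)$-orbits are compact, but one should say so), and that it passes continuously to the completions $\overline{M_+(n,m)}$ and $\overline{\Sym_+(m)}$ (which follows from compactness of $\SO(n)$ together with the extended continuity of $\distnm$ and $\distSym$, but again deserves a sentence). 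With those filled in, your route is correct and arguably more informative than the paper's.
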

Recall that a submetry is a map $f\colon X \to Y$ between two metric spaces such that $f(\overline{B}(x,r)) = \overline{B}(f(x),r)$ for all $x\in X$ and $r\geq 0$, where $\overline{B}$ denotes a closed ball. The definition implies that every submetry is in particular surjective and $1$-Lipschitz. If $G$ is a group of isometries of a metric space and with closed orbits then the quotient map is a submetry, see \cite{KL22}.
\begin{proof}
	By definition if $\alpha = O\beta$ for some $O\in \text{Mes}(M,\SO(n))$ then $\tilde{\pi}(\alpha) = \tilde{\pi}(\beta)$.
	We now show the opposite direction. Note that $\tilde{\pi}(\alpha)=\alpha^T\alpha$ is positive semidefinite. For every $x$ on which $(\alpha^T\alpha)(x)$ is positive definite there exists a unique positive definite matrix, denoted by $p(x)$, such that $p^2(x) = (\alpha^T\alpha)(x)$. Let 
	\begin{align}
		\tilde{p}(x) = \begin{cases}
			\begin{pmatrix}
				p\\
				0
			\end{pmatrix} & \text{for } x \text{ with } (\alpha^T\alpha)(x) \text{ full rank}\\[.5em]
			0 & \text{otherwise}.
		\end{cases}
	\end{align}
	The map $\tilde{p}\colon M \to M_+(n,m)$ is measurable, and moreover
	\begin{align}
		\int_M\sqrt{\det(g_0(x)^{-1}(\tilde{p}^T\tilde{p})(x))}d\mu_{g_0}(x) &= \int_M\sqrt{\det(g_0(x)^{-1} p^2(x))}d\mu_{g_0}(x)\\[.1em] 
		&= \int_M\sqrt{\det(g_0(x)^{-1}(\alpha^T\alpha)(x))}d\mu_{g_0}(x)<\infty,
	\end{align}
	so it naturally defines an element of $L^2(M,\overline{M_+(n,m)})$ via the bijection of Proposition \ref{prop-bijection}. Since we are supposing $\tilde{\pi}(\alpha)=\alpha^T\alpha = \beta^T\beta = \tilde{\pi}(\beta)$ we conclude that the map $\tilde{p}$ above is the same, $\mu_{g_0}$-a.e., for $\alpha$ and $\beta$. So it is enough to show that there exists $O\in \text{Mes}(M, \SO(n))$ such that $\alpha = O\tilde{p}$. Define
	\begin{align}
		O_1 = \begin{cases}
			\alpha p^{-1} & \text{for } x \text{ with } \alpha(x) \text{ full rank}\\[.5em]
			\begin{pmatrix}
				I_{m\times m} \\
				0
			\end{pmatrix} & \text{otherwise}
		\end{cases}
	\end{align}
	with $I_{m\times m}$ being the $m\times m$ identity matrix. Then we have
	\begin{align}
		(O_1^TO_1)(x) = I_{m\times m},
	\end{align}
	which means that the columns of $O_1(x)$ gives a set of orthonormal vectors with respect to the Euclidean scalar product. Let $O_2(x)$ be the matrix whose columns are an orthonormal basis of the orthogonal complement of the span of columns of $O_1(x)$ and let $O(x) = \begin{pmatrix}
		O_1(x) & O_2(x)
	\end{pmatrix}$. Observe that the choice of $O_2(x)$ can be made in a measurable way using the Gram-Schmidt procedure since $O_1$ is measurable. In addition, we can always change the sign of the last column in $O_2(x)$ such that $O\in \text{Mes}(M,\SO(n))$. Thus
	\begin{align}
		\alpha = O\tilde{p}.
	\end{align}
	Indeed $\alpha(x) = O(x)\tilde{p}(x)$ is clear for the points $x$ such that $\alpha(x)$ has full rank. For the points $x$ where $\alpha(x)$ has no full rank then also $O(x)\tilde{p}(x)$ has no full rank, so they are identified in $\overline{M_+(n,m)}$.
	
	In order to show the last part of the statement we need to prove that the orbits under $\text{Mes}(M,\SO(n))$ are closed. Let $\alpha_j = O_j\alpha_0$ be elements in the orbit of $\alpha_0 \in L^2(M,\overline{M_+(n,m)})$ and suppose $\alpha_j \to \alpha_\infty$ with respect to the distance $d_{L^2}$. For every $x\in M$ the elements $O_j(x)$ belong to the compact space $\SO(n)$, so up to a subsequence they converge to $O_\infty(x)$. Observe that the limit does not depend on the particular subsequence for $\mu_{g_0}$-almost every $x$, because it must hold $\alpha_\infty(x) = O_\infty(x)\alpha_0(x)$. Thus $O_\infty(x)$ is well defined for $\mu_{g_0}$-almost every $x$. Note that it is the limit of measurable functions so $O_\infty \in \text{Mes}(M,\SO(n))$ and clearly $\alpha_\infty = O_\infty \alpha_0$, which shows that the orbits are closed.
\end{proof}

\appendix
\section{Geodesics in the space of full-ranked one-forms}
\label{appendix:geo}
	We present an explicit formula for geodesics in the space of full-ranked one-forms $\Omega^1_+(M, \Rn)$, which refines the one given in \cite{bauer2021oneforms}, offering detailed expressions for the variables within the formula.
	\begin{prop}\label{thm.geo}
		Let $\alpha\in \Omega^1_+(M,\Rn)$ and $\zeta\in T_{\alpha}\Omega^1_+(M,\Rn)$. Then the geodesic in $\Omega^1_+(M,\Rn)$ starting at $\alpha$ in the direction of $\zeta$ is the curve
		\begin{align}\label{eq.geo}
			\alpha(t) = f(t)^{\frac1m}e^{s(t)(Z_0 - Z_0^T)}e^{s(t)Z_0^T\alpha\alpha^+}\alpha,
		\end{align}
		where $Z_0 = Z - \frac{\tr(Z)}{m}\alpha\alpha^+$ is the traceless part of $Z = \zeta\alpha^+$, and where
		\begin{align}
			f(t) &= \frac{m}{4}\tr(Z_0Z_0^T)t^2 + \left(1+\frac{\tr(Z)}{2} t\right)^2\\
			s(t) &=\begin{cases}
				\frac{2}{\sqrt{m\tr(Z_0Z_0^T)}}\arctan\left(\frac{\sqrt{m\tr(Z_0Z_0^T)}t}{2+\tr(Z)t}\right) & Z_0 \neq 0\\[.5em]
				\frac{t}{1 + \frac{t}{2}\tr(Z)} & Z_0 = 0.
			\end{cases}
		\end{align}
		In particular, the change in the induced volume element is given by
		\begin{align}
			\sqrt{\det(\alpha(t)^T\alpha(t))} = f(t)\sqrt{\det(\alpha^T\alpha)}.
		\end{align}
	\end{prop}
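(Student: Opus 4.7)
My plan is to reduce the problem to the finite-dimensional fiber, verify the two initial conditions directly, and then derive the geodesic equation either by matching with the existing (less explicit) formula of \cite[Theorem~4.9]{bauer2021oneforms} or by a horizontal/vertical decomposition relative to the submersion $\tilde\pi\colon\alpha\mapsto\alpha^T\alpha$. The first reduction is immediate: because the metric $G$ is a pointwise integral, its Euler--Lagrange equation decouples across $x\in M$, so $t\mapsto\alpha(t)$ is a geodesic of $(\Omega^1_+(M,\Rn),G)$ iff for every $x$ the curve $t\mapsto\alpha(t)(x)$ is a geodesic of $((\Rn\otimes T^*_xM)_+,\langle\cdot,\cdot\rangle_{\cdot,x})$; via a standard trivialization this becomes a geodesic problem on $(M_+(n,m), g_{n\times m})$, so it suffices to verify the formula with matrices.

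I would first check the initial conditions. At $t=0$ we have $f(0)=1$, $s(0)=0$ and both exponentials reduce to the identity, so $\alpha(0)=\alpha$. Differentiating the product at $t=0$ and using $s'(0)=1$ (in either case of the definition) together with $(f^{1/m})'(0)=\tr(Z)/m$ yields
\begin{equation*}
\alpha'(0)=\tfrac{\tr(Z)}{m}\alpha+(Z_0-Z_0^T)\alpha+Z_0^T\alpha\alpha^+\alpha.
\end{equation*}
The identity $\alpha^+\alpha=\on{Id}_{T_xM}$ makes the two $Z_0^T$-terms cancel, leaving $\alpha'(0)=\tfrac{\tr(Z)}{m}\alpha+Z_0\alpha=Z\alpha=\zeta\alpha^+\alpha=\zeta$.

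For the geodesic equation itself I would use the submersion $\tilde\pi$. Decompose $Z=\tfrac{\tr(Z)}{m}\alpha\alpha^+ +\tfrac12(Z_0+Z_0^T)+\tfrac12(Z_0-Z_0^T)$: the antisymmetric piece generates a purely vertical flow, since $e^{s(Z_0-Z_0^T)}$ is orthogonal and leaves $\alpha^T\alpha$ unchanged, while the trace and symmetric pieces are horizontal. The horizontal part projects under $\tilde\pi$ to a geodesic on $(\Met(M),\distMet)$, whose explicit Freed--Groisser/Clarke closed form (see \cite{freed1989basic,clarke2013geodesics}) produces exactly the polynomial $f(t)$ as the evolution of the conformal (volume) factor and the reparametrization $s(t)$ as the arc-length along the fiber direction. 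Recombining the vertical and horizontal pieces yields the stated product; alternatively one may import \cite[Theorem~4.9]{bauer2021oneforms} directly and identify coefficients.

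Finally, the volume identity follows from the orthogonality of $e^{s(Z_0-Z_0^T)}$, which drops out of
\begin{equation*}
\alpha(t)^T\alpha(t)=f(t)^{2/m}\,\alpha^T e^{s\alpha\alpha^+ Z_0}e^{sZ_0^T\alpha\alpha^+}\alpha;
\end{equation*}
using $(\alpha\alpha^+)^T=\alpha\alpha^+$, $\tr(Z_0)=0$ and $\alpha^+\alpha=\on{Id}$ to simplify the determinant leaves only the scalar factor $f(t)^2$, hence $\sqrt{\det(\alpha(t)^T\alpha(t))}=f(t)\sqrt{\det(\alpha^T\alpha)}$. The main obstacle is the non-commuting matrix bookkeeping in the third step: tracking the interplay between $e^{s(Z_0-Z_0^T)}$ and $e^{sZ_0^T\alpha\alpha^+}$ and confirming that the explicit scalars $f(t)$ and $s(t)$ are precisely those produced by lifting the base geodesic horizontally --- a computation driven throughout by the two identities $\alpha^+\alpha=\on{Id}$ and $(\alpha\alpha^+)^T=\alpha\alpha^+$.
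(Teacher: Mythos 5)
Your fallback route---importing the explicit geodesic formula from \cite{bauer2021oneforms} and matching coefficients---is precisely what the paper does (it rewrites \cite[Theorem~3.6]{bauer2021oneforms} for $M_+(n,m)$, translated pointwise). The initial-condition check $\alpha(0)=\alpha$, $\alpha'(0)=\zeta$ is a good sanity check, and your volume computation is correct once one inserts the contraction identity $\alpha^T e^{s\alpha B\alpha^+}=\alpha^T\alpha\, e^{sB}\alpha^+$, which follows from $(aBa^+)^k=aB^ka^+$ and $\alpha^T\alpha\alpha^+=\alpha^T$.

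Your \emph{primary} route, however, contains a genuine gap. You decompose $Z$ into a vertical (skew) piece and a horizontal (trace plus symmetric) piece, lift the base geodesic horizontally, flow vertically, and assert that ``recombining the vertical and horizontal pieces yields the stated product.'' That step is unjustified: in a Riemannian submersion, a geodesic of the total space with a nonzero vertical component is \emph{not} in general the composition of a vertical isometry flow with a horizontal geodesic lift. The coupling between horizontal and vertical velocities is governed by the O'Neill $A$-tensor, which vanishes only when the horizontal distribution is integrable; otherwise the geodesic equation mixes the two components. For the submersion $\pi\colon M_+(n,m)\to\Sym_+(m)$ this needs to be computed, not assumed---and indeed verifying it would essentially reproduce the derivation in \cite{bauer2021oneforms}. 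Note also that checking $\alpha(0)=\alpha$ and $\alpha'(0)=\zeta$ only fixes the initial data of the candidate curve; it does not show that the curve satisfies the geodesic equation. So you should rely on the import-and-match route (your ``alternatively\ldots'' clause), which is what the paper actually carries out and is the only one of your two proposals that closes.
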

	\begin{proof}
		This theorem is basically a reformulation of \cite[Theorem~3.6]{bauer2021oneforms} of the geodesic equation on the space of full rank matrices $M_+(n,m)$. The pointwise nature of the metric \eqref{eq.metric.OneForms} allows one to translate the result directly to the space of full-ranked one-forms $\Omega^1_+(M,\Rn)$. Using $\alpha^+\alpha = I_{m\times m}$ and $Z\alpha\alpha^+ = Z$, with the same $f(t), s(t), \omega_0$ and $P_0$ as in \cite[Theorem~3.6]{bauer2021oneforms} we compute
		\begin{align}
			\delta &= \tr(Z^TZ) = \tr\left(\left(Z_0 + \frac{tr(Z)}{m}\alpha\alpha^+\right) \left(Z_0^T + \frac{tr(Z)}{m}\alpha\alpha^+\right)\right)\\
			& = \tr(Z_0Z_0^T) + \frac{2\tr(Z)}{m}\tr(Z_0\alpha\alpha^+) + \frac{(\tr(Z))^2}{m}\\
			& = \tr(Z_0Z_0^T) + \frac{(\tr(Z))^2}{m}.
		\end{align}
		Then 
		\begin{align}\label{eq.f}
			f(t) &=  \frac{m\delta}{4}t^2+\tau t+1\\
			& = \frac{m}{4}\left(\tr(Z_0Z_0^T) + \frac{(\tr(Z))^2}{m}\right)t^2 + \tr(Z) t + 1\\
			&= \frac{m}{4}\tr(Z_0Z_0^T)t^2 + \frac{(\tr(Z))^2}{4} t^2 + \tr(Z) t + 1\\
			&= \frac{m}{4}\tr(Z_0Z_0^T)t^2 + \left(1+\frac{\tr(Z)}{2} t\right)^2,
		\end{align}
		and thus
		\begin{align}\label{eq.s}
			s(t) &= \int_0^t\frac{d\sigma}{f(\sigma)} = \int_0^t\frac{1}{\frac{m}{4}\tr(Z_0Z_0^T)\sigma^2 + \left(1+\frac{\tr(Z)}{2} \sigma\right)^2}d\sigma\\
			&= 4\int_0^t \frac{d\sigma}{m\tr(Z_0Z_0^T)\sigma^2 + (2+\tr(Z)\sigma)^2}\\
			&=\begin{cases}
				\frac{2}{\sqrt{m\tr(Z_0Z_0^T)}}\arctan\left(\frac{\sqrt{m\tr(Z_0Z_0^T)}t}{2+\tr(Z)t}\right) &  Z_0 \neq 0\\[.5em]
				\frac{t}{1 + \frac{t}{2}\tr(Z)} & Z_0 = 0.
			\end{cases}
		\end{align}
		Note that for any arbitrary $m\times m$ matrix $A$ and full rank $n\times m$ matrix $a$,
		\begin{align}
			e^{aAa^+} = I_{n\times n} - \alpha\alpha^+ + ae^Aa^+.
		\end{align}	
		It follows that $\alpha e^{s(t)P} = e^{s(t)\alpha P\alpha^+}\alpha$. By computation 
		\begin{align}
			\alpha P\alpha^+ &= \alpha\left((\alpha^T\alpha)^{-1}(\zeta^T\alpha) - \frac{\tau}{m}I_{m\times m}\right)\alpha^+\\
			&= Z^T\alpha\alpha^+ - \frac{\tr(Z)}{m}\alpha\alpha^+\\
			&= Z_0^T\alpha\alpha^+.
		\end{align}
		It is easy to see that $\omega = Z - Z^T = Z_0 - Z_0^T$. We then have a reformulation of the geodesic formula
		\begin{align}
			\alpha(t) &= f(t)^{1/m}e^{-s(t)\omega}\alpha e^{s(t)P}\\
			&= f(t)^{1/m}e^{s(t)(Z_0 - Z_0^T)}e^{s(t)Z_0^T\alpha\alpha^+}\alpha,
		\end{align}
		where $f(t)$ and $s(t)$ are as in \eqref{eq.f} and \eqref{eq.s}. 
		In addition, let $\zeta_0 =  \zeta - \frac{\tr(Z)}{m}\alpha$ and we compute 
		\begin{align}
			\alpha(t)^T\alpha(t) &= f^{2/m}\alpha^Te^{s(t)\alpha\alpha^TZ_0}e^{s(t)Z_0^T\alpha\alpha^+}\alpha\\
			&= f^{2/m}e^{s(t)\alpha^T\zeta_0(\alpha^T\alpha)^{-1}}(\alpha^T\alpha)e^{s(t)(\alpha^T\alpha)^{-1}\zeta_0^T\alpha\alpha}.
		\end{align}
		Using the property of the matrix exponential that $\det(e^{A}) = e^{\tr(A)}$ we obtain
		\begin{align}
			\det(\alpha(t)^T\alpha(t)) &= f(t)^2e^{\tr(s(t)\alpha^T\zeta_0(\alpha^T\alpha)^{-1})}\det(\alpha^T\alpha)e^{\tr(s(t)(\alpha^T\alpha)^{-1}\zeta_0^T\alpha)}\\
			&=f(t)^2e^{s(t)\tr(Z_0)}\det(\alpha^T\alpha)e^{s(t)\tr(Z_0)}\\
			&=f(t)^2\det(\alpha^T\alpha).
		\end{align}
		The second statement follows immediately.
	\end{proof}
	It is easy to see from the geodesic formula \eqref{eq.geo} that the geodesic is only defined for $t\in[0, t_0]$ with $t_0 = -\frac{2}{-\tr(Z)}$ for the points of $M$ where $Z_0 = 0$ and $\tr(Z)<0$. As a direct consequence, we have geodesically incompleteness and the metric incompleteness of the space of full-ranked one-forms $\Omega^1_+(M,\Rn)$ and the space of full rank matrices $M_+(n,m)$.
	
	\section{Theorem \ref{theo-intro-dist-equality} for the Ebin metric}\label{appendix:Ebin}
	The aim of this appendix is to show an alternative proof of the equivalent of Theorem \ref{theo-intro-dist-equality} for the Ebin metric on the space of Riemannian metrics of a compact, connected, orientable manifold without boundary, which simplifies the argument of \cite[Theorem 3.8]{clarke2013geodesics}. In the sequel $M$ is a compact, connected, orientable, $m$-dimensional manifold without boundary. The space Met$(M)$ is by definition the set of smooth sections of the fiber bundle $E=S^2_+T^*M$ of positive definite symmetric $(0,2)$-tensors. It is equipped with the Riemannian structure $(\cdot,\cdot)_{L^2}$ defined by \eqref{eq.metric.RieMetrics}. It can be expressed equivalently with the help of a fixed Riemannian metric $g_0$ of volume $1$ of $M$, as observed for instance in \cite{clarke2013geodesics}. Indeed to each fiber $E_x = S^2_+T_x^*M$ of the fiber bundle $E$ is associated the Riemannian metric 
	\begin{equation}
		\langle a,b \rangle_{h,x} = \frac14\tr(h^{-1}ah^{-1}b)\sqrt{\det(g_0(x)^{-1}h)},
	\end{equation}
	where $h \in S^2_+T_x^*M$ and $a,b \in T_h S^2_+T_x^*M \cong S^2T_x^*M$. For this appendix we denote this metric by $g_x$ and its induced distance by $d_x$. The metric \eqref{eq.metric.RieMetrics} can be equivalently defined as
	\begin{equation}
		\label{eq.metric.Riem.def}
		(h,k)_g = \frac{1}{4}\int_M \langle h(x),k(x)\rangle_{g(x),x} d\mu_{g_0}(x).
	\end{equation}
	Recall that the induced distance on $\Met(M)$ is denoted by $\distMet$. 
	\begin{thm}
		\label{theo-appendix}
		In the situation above it holds
		$$\distMet(g,g')^2 = \int_M d_x(g(x),g'(x))^2 d\mu_{g_0}(x)$$
		for all $g,g'\in \Met(M)$.
	\end{thm}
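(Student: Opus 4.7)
The plan is to adapt the proof of Theorem \ref{theo-intro-dist-equality} verbatim, exploiting one crucial simplification: the fiber $S^2_+T_y^*M \subseteq S^2T_y^*M$ is an open \emph{convex cone}, so any nonnegative convex combination of positive definite symmetric tensors is again positive definite. The easy direction $\distMet(g,g')^2 \geq \int_M d_x(g(x),g'(x))^2 d\mu_{g_0}(x)$ is standard: for any piecewise $C^1$-path $c$ in $\Met(M)$ joining $g$ to $g'$, Cauchy--Schwarz in $t$, Fubini, and the fiber bound $L_{g_y}(c(\cdot)(y))\geq d_y(g(y),g'(y))$ give it (this is the argument in \cite{clarke2013geodesics}). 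The substantive direction is the reverse.

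For that, fix $\varepsilon>0$. As in the proof of Theorem \ref{theo-intro-dist-equality}, for every $x\in M$ I would pick a neighbourhood $U_x^\varepsilon$ supporting a standard trivialization $\varphi_x^\varepsilon$ of $S^2T^*M$ such that the linear fiber-segments from $\varphi_x^\varepsilon\vert_x(g(x))$ to $\varphi_x^\varepsilon\vert_y(g(y))$ and from $\varphi_x^\varepsilon\vert_x(g'(x))$ to $\varphi_x^\varepsilon\vert_y(g'(y))$ remain in $\Sym_+(m)$ and have length below $\varepsilon$ for every $y\in U_x^\varepsilon$, using the obvious analogues for $\Sym_+(m)$ of Lemmas \ref{lemma-comparison-length}--\ref{lemma-piecewise-approximation}. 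The analogue of Lemma \ref{lemma-piecewise-approximation} also furnishes a piecewise linear curve $c_x^\varepsilon=\lin(A_{0,x},\dots,A_{k(x),x})\subseteq \Sym_+(m)$ joining the two trivialized values with length below $\distSym(\varphi_x^\varepsilon\vert_x(g(x)),\varphi_x^\varepsilon\vert_x(g'(x)))+\varepsilon$. Extract a finite subcover $\{U_i^\varepsilon\}$ (with common length $k$), set
\begin{equation*}
\Gamma_i^\varepsilon(t)(y) := \lin\bigl(g(y), (\varphi_i^\varepsilon\vert_y)^{-1}(A_{0,i}),\dots, (\varphi_i^\varepsilon\vert_y)^{-1}(A_{k,i}), g'(y)\bigr)(t)
\end{equation*}
for $y\in U_i^\varepsilon$, and glue via a subordinate partition of unity $\{\rho_i^\varepsilon\}$ into the global section $\Gamma^\varepsilon(t)(y) := \sum_i \rho_i^\varepsilon(y)\,\Gamma_i^\varepsilon(t)(y)$. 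This is the step where the Ebin case is strictly easier than the one-forms case: because every $\Gamma_i^\varepsilon(t)(y)$ is positive definite and the weights $\rho_i^\varepsilon(y)$ are nonnegative and sum to $1$, the convex combination $\Gamma^\varepsilon(t)(y)$ is positive definite for free, so $\Gamma^\varepsilon(t)\in \Met(M)$ for every $t$ with no perturbation needed and no analogue of the Key Lemma \ref{lemma-combination}.

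The final length estimate then copies the one at the end of the proof of Theorem \ref{theo-intro-dist-equality}: Cauchy--Schwarz in $t$, Fubini in $(t,y)$, and the fiberwise triangle inequality $\bigl\|\sum_i \rho_i^\varepsilon(y) v_i\bigr\|_{g_y}\leq \max_i \|v_i\|_{g_y}$ applied to $v_i=\tfrac{d}{dt}\Gamma_i^\varepsilon(t)(y)$, combined with the local bound $L_{g_y}(\Gamma_i^\varepsilon(\cdot)(y)) < d_y(g(y),g'(y))+3\varepsilon$, yield
\begin{equation*}
\distMet(g,g')^2 \leq \int_M \bigl(d_y(g(y),g'(y))+3\varepsilon\bigr)^2\, d\mu_{g_0}(y),
\end{equation*}
and sending $\varepsilon\to 0$ concludes. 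I do not anticipate any real obstacle here; the only step meriting a verification is that the analogues of Lemmas \ref{lemma-comparison-length}--\ref{lemma-piecewise-approximation} hold in $(\Sym_+(m),\distSym)$, which is immediate since $\Sym_+(m)$ is an open subset of the finite-dimensional vector space $\Sym(m)$ and $\distSym$ is bi-Lipschitz to the Euclidean distance on each compact subset.
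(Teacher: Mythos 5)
Your proposal is correct and follows essentially the same route as the paper's Appendix~\ref{appendix:Ebin} proof: local standard trivializations, fiberwise piecewise linear curves of controlled length, a partition-of-unity gluing that stays in $\Met(M)$ automatically by convexity of $\Sym_+(m)$ (so no analogue of Lemma~\ref{lemma-combination} or the perturbation argument is needed), and then the Cauchy--Schwarz/Fubini estimate with the bound $\bigl\|\sum_i \rho_i^\varepsilon(y)\,v_i\bigr\|_{g_y}\leq\max_i\|v_i\|_{g_y}$. You also correctly identify that the easy inequality is from \cite{clarke2013geodesics} and that the only auxiliary facts needed are the $\Sym_+(m)$ analogues of Lemmas~\ref{lemma-comparison-length}--\ref{lemma-piecewise-approximation}, which hold for the same reasons.
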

	
	We need a bit of preparation, in the same spirit of Section \ref{sec.L2metric}.\\
	A local trivialization $\varphi$ of the bundle $E$ induced by a local chart on an open set $U$ sending the volume form $\mu_{g_0}$ to the Euclidean one is called \emph{standard}. Let $\varphi\vert_x \colon E_x \to \Sym_+(m)$ be the restriction of $\varphi$ to the fiber at $x\in U$. Then the pushforward of the metric on $ E_x$, namely $(\varphi\vert_x)_*(\langle \cdot,\cdot \rangle_{\cdot,x})$, defines the Riemannian structure \eqref{eq.metric.sym} on $\Sym_+(m)$. We denote it by $g_{m,+}$. Recall that its induced distance is denoted by $\distSym$. Therefore if $\varphi$ is a standard local trivialization around $x\in M$ then $\varphi\vert_x$ is an isometry between
	$d_x$ and $\distSym$.
	
	A curve $c\colon [0,1] \to \Sym_+(m)$ is said to be piecewise linear if it is the concatenation of $k$ linear segments $\lin(A_0, A_1)$, $\lin(A_1, A_2), \ldots$, $\lin(A_{k-1}, A_k)$ and $c$ is parametrized proportionally to arc-length with respect to $g_{m,+}$ in such a way that $\left\Vert \frac{d}{dt}c(t)\right\Vert_{g_{m,+}} = L_{g_{m,+}}(c)$. Here the right hand side is the length of the curve with respect to the Riemannian metric $g_{m,+}$.
	Because of this parametrization we always have:
	\begin{equation}
		\label{eq-estimate—Ebin}
		\int_0^1 \left\Vert \frac{d}{dt}c(t)\right\Vert_{g_{m,+}}^2 dt = L_{g_{m,+}}(c)^2.
	\end{equation}
	The distance $\distSym$ can be computed using piecewise linear curves.
	\begin{lem}
		\label{lemma-piecewise-approximation-riem}
		Let $c\colon [0,1]\to \Sym_+(m)$ be a piecewise $C^1$-curve and $\varepsilon > 0$. Then there exists a piecewise linear curve $c_\varepsilon$ with same endpoints of $c$ and such that $\vert L_{g_{m,+}}(c_\varepsilon) - L_{g_{m,+}}(c) \vert < \varepsilon$.
	\end{lem}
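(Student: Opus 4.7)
The plan is to mirror the proof of Lemma~\ref{lemma-piecewise-approximation}, which established the analogous statement for the space of full-rank matrices, substituting $\Sym_+(m)$ and the metric $g_{m,+}$ in place of $M_+(n,m)$ and $g_{n\times m}$. The only geometric input of that earlier proof is the openness of the ambient fiber in the matrix space, together with the standard length-convergence argument for piecewise linear interpolants of a $C^1$-curve; both features carry over here, and in fact the situation is slightly better because $\Sym_+(m)$ is convex.

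First, I would choose a partition $0 = t_0 < t_1 < \cdots < t_k = 1$ of $[0,1]$ containing all the points of non-differentiability of $c$, and form the piecewise linear curve $c_k = \lin(c(t_0), \ldots, c(t_k))$. By convexity of $\Sym_+(m)$, each segment $\lin(c(t_i), c(t_{i+1}))$ automatically lies in $\Sym_+(m)$, so no fineness hypothesis on the partition is required to guarantee $c_k \subseteq \Sym_+(m)$ (this simplifies the argument compared to the $M_+(n,m)$ case, where one had to invoke openness together with compactness of $c([0,1])$). The curve $c_k$ is then reparametrized proportionally to arc-length with respect to $g_{m,+}$ as required by the definition of a piecewise linear curve.

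As the mesh of the partition tends to zero, a standard pointwise argument gives $\tfrac{d}{dt} c_k(t) \to \tfrac{d}{dt} c(t)$ in $T\Sym_+(m)$ for every $t \in [0,1] \setminus \left(\bigcup_k \{t_k\} \cup \{\text{non-differentiable points of } c\}\right)$, that is, almost everywhere. Since all the $c_k$'s remain in a compact subset of $\Sym_+(m)$ containing $c([0,1])$, the norms $\big\Vert \tfrac{d}{dt} c_k(t)\big\Vert_{g_{m,+}}$ are uniformly bounded, and Lebesgue's dominated convergence theorem yields $L_{g_{m,+}}(c_k) \to L_{g_{m,+}}(c)$. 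Taking the partition fine enough gives the desired $c_\varepsilon := c_k$. The argument presents essentially no obstacle: it is a near-verbatim translation of Lemma~\ref{lemma-piecewise-approximation}, and the only minor technicality worth verifying is that the global reparametrization proportional to arc-length does not spoil the pointwise convergence of velocities, which is immediate since the total lengths $L_{g_{m,+}}(c_k)$ themselves converge and hence the reparametrization factors are bounded and convergent.
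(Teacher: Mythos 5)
Your argument is correct and is essentially the paper's, which simply cites the proof of Lemma~\ref{lemma-piecewise-approximation}; your observation that convexity of $\Sym_+(m)$ makes the fine-partition step unnecessary (and keeps all the $c_k$ inside the compact convex hull of $c([0,1])$, giving the domination for free) is a welcome simplification. The worry you raise about the arc-length reparametrization at the end is unfounded and the slightly circular justification is not needed: length is parametrization-independent, so one computes velocities and lengths of $c_k$ in its natural parametrization on the partition, obtains $L_{g_{m,+}}(c_k) \to L_{g_{m,+}}(c)$ by dominated convergence, and the same length is attached to the arc-length reparametrized $c_\varepsilon$ automatically.
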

	The proof is the same of Lemma \ref{lemma-piecewise-approximation}.
	
	\begin{proof}[Proof of Theorem \ref{theo-appendix}] As observed in \cite[Theorem 2.1]{clarke2013geodesics} the left hand side is always bigger than or equal the right hand side. So what we need to prove is
		$$\distMet(g, g')^2 \leq \int_M d_x(g(x),g'(x))^2d\mu_{g_0}(x).$$
		For all $x\in M$ and all $\varepsilon > 0$ we can find a neighbourhood $U_x^\varepsilon$ of $x$ supporting a standard trivialization $\varphi_x^\varepsilon$ such that 
		\begin{itemize}
			\item[(i)] the segments $\lin(\varphi_x^\varepsilon\vert_x(g(x)), \varphi_x^\varepsilon\vert_y(g(y)))$ and $\lin(\varphi_x^\varepsilon\vert_x(g'(x)), \varphi_x^\varepsilon\vert_y(g'(y)))$ are contained in $\Sym_+(m)$ for all $y\in U_x^\varepsilon$, by convexity of $\Sym_+(m)$;
			\item[(ii)] the lengths of the segments above is smaller than $\varepsilon$ for all $y\in U_x^\varepsilon$, namely
			$$ L_{g_{m,+}}(\lin(\varphi_x^\varepsilon\vert_x(g(x)), \varphi_x^\varepsilon\vert_y(g(y)))) < \varepsilon$$
			and the same for $g'$.
		\end{itemize}
		
		For every $x\in M$ we can apply Lemma \ref{lemma-piecewise-approximation-riem} to find a piecewise linear curve $c_x^\varepsilon \subseteq \Sym_+(m)$ with endpoints $\varphi_x^\varepsilon\vert_x(g(x))$ and $\varphi_x^\varepsilon\vert_x(g'(x))$ and such that 
		$$L_{g_{m,+}}(c_x^\varepsilon)< \distSym(\varphi_x^\varepsilon\vert_x(g(x)),\varphi_x^\varepsilon\vert_x(g'(x))) + \varepsilon.$$
		
		By compactness we extract a finite covering $\lbrace U_i^\varepsilon \rbrace$ from the covering $\lbrace U_x^\varepsilon \rbrace$. Let us call $\varphi_i^\varepsilon$ the trivializing chart for $U_i^\varepsilon$ and $c_i^\varepsilon = \lin(A_{0,i},\ldots,A_{k(i),i})$ the piecewise linear curve associated to this neighbourhood. By finiteness we can suppose without loss of generality that $k(i)=k$ for each $i$, maybe adding some constant subpath. We define $\Gamma_i^\varepsilon \colon [0,1] \to \textup{Met}(U_i^\varepsilon)$ by
		\begin{equation*}
			\begin{aligned}
				\Gamma_i^\varepsilon(\cdot)(y) &= \lin(g(y), \varphi_i^{\varepsilon}\vert_y^{-1}(A_{0,i}),\ldots,\varphi_i^{\varepsilon}\vert_y^{-1}(A_{k,i}),g'(y))\\
				&= \varphi_i^{\varepsilon}\vert_y^{-1}(\lin(\varphi_i^{\varepsilon}\vert_y(g(y)), A_{0,i}, \ldots, A_{k,i}, \varphi_i^{\varepsilon}\vert_y(g'(y)))).
			\end{aligned}
		\end{equation*}
		Observe that for each fixed $y\in U_i^\varepsilon$ this is a piecewise linear curve living on $S^2T_y^*M$.
		
		Condition (i) guarantees that this curve is positive-definite. Moreover $\Gamma_i^\varepsilon$ is piecewise $C^1$ and satisfies $\Gamma_i^\varepsilon(0) = g\vert_{U_i^\varepsilon}$, $\Gamma_i^\varepsilon(1) = g'\vert_{U_i^\varepsilon}$ and
		\begin{equation}
			\label{eq-Gamma-i}
			L_{g_y}(\Gamma_i^\varepsilon(\cdot)(y)) < d_y(g(y),g'(y)) + 3\varepsilon
		\end{equation} 
		for all $y \in U_i^\varepsilon$ by (ii).
		
		Let $\lbrace \rho_i^\varepsilon \rbrace$ be a partition of unity associated to the covering $\lbrace U_i^\varepsilon \rbrace$. For $y\in M$ we define $I_y$ to be the set of indices $i$ such that $\rho_i^\varepsilon(y)>0$, in particular $y\in U_i^\varepsilon$.\\	
		We define the map $\Gamma^\varepsilon \colon [0,1] \to \textup{Met}(M)$ by $t\mapsto (y\mapsto \Sigma_i\rho_i^\varepsilon(y)\Gamma_i^\varepsilon(t)(y))$. It is piecewise $C^1$ and $\Gamma^\varepsilon(t)(y)$ is positive definite for all $(t,y)\in [0,1]\times M$ because $\Sym_+(m)$ is convex. Moreover  $\Gamma^\varepsilon(0) = g$ and $\Gamma^\varepsilon(1) = g'$. 
		We can now estimate the distance between $g$ and $g'$ using this curve:
		\begin{align}
			d_{L^2}(g, g')^2 &\leq \left(\int_0^{1} \left\Vert \frac{d}{dt}\Gamma^\varepsilon(t)(\cdot)\right\Vert dt\right)^2 \\
			&\leq \int_0^{1}\left\Vert \frac{d}{dt}\Gamma^\varepsilon(t)(\cdot)\right\Vert^2 dt\\
			&= \int_0^{1} \left\Vert \Sigma_{i}\rho_i^\varepsilon(\cdot)\frac{d}{dt}\Gamma_i^\varepsilon(t)(\cdot)\right\Vert^2 dt \\
			&=\int_0^{1}\int_M \left\Vert \Sigma_{i}\rho_i^\varepsilon(y)\frac{d}{dt}\Gamma_i^\varepsilon(t)(y)\right\Vert_{g_y}^2d\mu_{g_0}(y)\\
			&\leq \int_0^{1} \int_M (\Sigma_{i\in I_y}\rho_i^\varepsilon(y))^2 \cdot\max_{i\in I_y}\left\Vert\frac{d}{dt}\Gamma_i^\varepsilon(t)(y)\right\Vert_{g_y}^2 d\mu_{g_0}(y) dt \\
			&= \int_M \int_0^{1} \max_{i\in I_y}\left\Vert\frac{d}{dt}\Gamma_i^\varepsilon(t)(y)\right\Vert_{g_y}^2 dt d\mu_{g_0}(y).
		\end{align}
		For all $i\in I_y$ we have $$\int_0^{1}\left\Vert\frac{d}{dt}\Gamma_i^\varepsilon(t)(y)\right\Vert_{g_y}^2 \leq L_{g_y}(\Gamma_i^\varepsilon(t)(y))^2 \leq (d_y(g(y),g'(y))+ 3\varepsilon)^2$$
		because of \eqref{eq-estimate—Ebin} and \eqref{eq-Gamma-i}. Therefore
		$$d_{L^2}(g,g') \leq \int_M (d_y(g(y),g'(y))+ 3\varepsilon)^2 d\mu_{g_0}(y).$$
		The thesis follows by taking $\varepsilon$ going to $0$.
	\end{proof}

\bibliographystyle{abbrv}
\bibliography{refs}
\end{document}